\newtheorem{theorem}{Theorem}
\newtheorem{assumption}{Assumption}
\newtheorem{lemma}{Lemma}
\newtheorem{corollary}[theorem]{Corollary}
\numberwithin{equation}{section}
\newcommand{\Cov}{\operatorname{Cov}}
\newcommand{\Var}{\operatorname{Var}}
\newcommand{\E}{\operatorname{E}}
\newcommand{\utilde}{\underaccent{\tilde}}
\begin{document}

\title{Sequential change point tests based on U-statistics}
\author{Claudia Kirch\footnote{\, Otto-von-Guericke University Magdeburg, Institute for Mathematical Stochastics,  Magdeburg, Germany} \and Christina Stoehr\footnote{\, Ruhr-University Bochum, Faculty of Mathematics,  Bochum, Germany; christina.stoehr@ruhr-uni-bochum.de} }
\date{December 2019}

\maketitle
\begin{center}
\begin{minipage}{0.8\textwidth}
\begin{center}\textbf{Abstract}\end{center}
We propose a general framework of sequential testing procedures based on $U$-statistics which contains as an example a sequential CUSUM test based on differences in mean but also includes  a robust sequential Wilcoxon change point procedure. Within this framework, we consider several monitoring schemes that take different observations into account to make a decision at a given time point. Unlike the originally proposed scheme that takes all observations of the monitoring period into account, we also consider a modified moving-sum-version as well as a version of a Page-monitoring scheme. The latter behave almost as good for early changes while being advantageous for later changes. For all proposed procedures we provide the limit distribution under the null hypothesis which yields the threshold to control the asymptotic type-I-error. Furthermore, we show that the proposed tests have asymptotic power one. In a simulation study we compare the performance of the sequential procedures via their empirical size, power and detection delay, which is further illustrated by means of a temperature data set.
\end{minipage}\\
\end{center}

\textbf{Keywords:} structural breaks, Wilcoxon  statistics, CUSUM statistics, data monitoring, control charts

\textbf{MSC2000 Classification: 62L10}

\section{Introduction}\label{sec.intro}
Change point and data segmentation procedures have a long tradition in statistics. So called a-posteriori or offline procedures deal with the testing and segmentation of completely observed data, see e.g. \cite{aue2013} and \cite{horvath2014} for two recent survey articles mainly concerned with a-posteriori testing or \cite{fryzlewicz2014wild}, \cite{killick2012optimal}, \cite{rigaill2015pruned} and \cite{frick2014multiscale} to name but a few articles dealing with data segmentation.

On the other hand, data is collected more and more automatically with data arriving one by one, which requires a different statistical methodology designed to sequentially (online) make a decision after each new observation whether a break is likely to have occurred. In this setup an additional focus lies in the quick detection of changes after they have occurred. Examples include the monitoring of medical data of patients (e.g. \cite{fried2004online}), financial data (e.g. \cite{andreou2006monitoring}, \cite{aue2012sequential}) as well as deforestation monitoring (e.g. \cite{dutrieux2015monitoring}).

In sequential change point detection, there are different approaches: One approach aims at minimizing the mean detection delay while not causing too frequent alarms if no change occurs, see \cite{tarta} for a recent monograph.
In this paper, we follow a different approach closer related to classical testing theory, that was first proposed by
\cite{Chu}. By making use of a historic data set without changes their approach can control the asymptotic type-I-error while having power one in many situations.
This approach has been extended allowing for more general error sequences, multivariate observations but also different types of changes by several other authors including e.g.\
\cite{Hor}, \cite{aue2006change}, \cite{huvskova2005monitoring} or \cite{ciuperca2013two}. Additionally, \cite{chen2010modified}, \cite{fremdt2015page} and \cite{kirch2018modified} have proposed modified versions of the original monitoring scheme focusing more on recent monitoring observations to make a decision. These procedures outperform the original one for later changes while giving comparable results for early changes.

In this paper we present a unified theory based on $U$-statistics for different monitoring schemes. This is different from the unified theory based on estimating functions as proposed by 
\cite{Est} and further considered in \cite{kirch2018modified}. In particular, the proposed methodology includes a robust Wilcoxon monitoring, thus adapting the a-posteriori methodology proposed by \cite{Dehl}.

\subsection{Change point problem and monitoring statistics}\label{sec.model}
Following  \cite{Chu}  we assume the existence of a historic data set $X_1,\ldots, X_m$ without a change. These historic observations can be used to estimate unknown parameters consistently. Asymptotic results are then obtained by letting the length of the historic data set increase to infinity. Subsequent to the historic observations we start monitoring new incoming data by testing for a structural break after each new observation $X_{m+k},k\geq 1,$ where $k$ denotes the monitoring time, by means of a monitoring statistic $\Psi(m,k)$ not depending on future data.
We consider the following general change model
\begin{equation}\label{meanmodel}
X_{i,m}=1_{\{1\leq i\leq k^*+m\}}Y_i+ 1_{\{i>k^*+m\}}Z_{i,m},\quad i\geq 1,
\end{equation}
where $\{Y_i\}_{i\in\mathbb{Z}}$ and $\{Z_{i,m}\}_{i\in\mathbb{Z}}$ are suitable stationary time series. The distribution of the time series after the change and thus the change itself is allowed to depend on $m$. The null hypothesis then corresponds to $k^
*=\infty$. Our examples focus on the special case of a classical mean change model with $Z_{i,m}=Y_i+d_m$, i.e.
\begin{equation}\label{meanex}
X_{i,m}=Y_i+1_{\{i>k^*+m\}}d_m,\quad d_m\neq 0,
\end{equation}
where the change in the mean $d_m$ is allowed to depend on m.\\

Because the monitoring continues as long as no alarm is given, the monitoring horizon is potentially infinite. By introducing a weight function $w(m,k)$ as e.g.\ given in \eqref{exw} below it is still possible to control the asymptotic type-I-error. More precisely, an alarm is given as soon as $$w(m,k)\left|\Psi(m,k)\right|>c_{\alpha},$$ where the critical value $c_{\alpha}$ is chosen such that the testing procedure holds the level $\alpha$ asymptotically. Equivalently (for $w(m,k)\neq 0$) an alarm is given as soon as the absolute monitoring statistic $\left|\Psi(m,k)\right|$ exceeds the critical curve given by $\frac{c_{\alpha}}{w(m,k)}$. 

As long as the monitoring statistic does not exceed the critical curve, we 
 continue monitoring, such that the stopping time $\tau_m$ is given by
 \begin{align*}
				\tau_m=\begin{cases}
				\inf\{k\geq 1:w(m,k)\left|\Psi(m,k)\right|>c_{\alpha}\},\\
				\infty,\quad \mbox{if } w(m,k)\left|\Psi(m,k)\right|\leq c_{\alpha}\mbox{ for all k}.
				\end{cases}
				\end{align*}
				This setup allows to control the type-I-error as in classical statistics by choosing the critical value $c_{\alpha}$ for a given level $\alpha$ such that under the null hypothesis of no change
				$$
				\lim_{m\rightarrow\infty}P_{H_0}(\tau_m<\infty)=P_{H_0}\left(\sup_{k\geq 1}w(m,k)\left|\Psi(m,k)\right|> c_{\alpha}\right)=\alpha.
			$$
	Furthermore, it will turn out that typically under weak assumptions on the alternative such a sequential test also has asymptotic power one,				$$
				\lim_{m\rightarrow\infty}P_{H_1}(\tau_m<\infty)=1.
				$$

At each time point the question effectively comes down to a two-sample problem testing whether the distribution from the historic data set is still valid. For such a problem $U$-statistics have favorable properties, see e.g.\ \cite{ustat} for a recent monograph.  Therefore, we consider monitoring schemes that are based on the following $U$-statistic:
\begin{align}\label{detstat}
\Gamma(m,k)=\frac{1}{m}\sum_{i=1}^m\sum_{j=m+1}^{m+k}(h(X_i,X_j)-\theta),
\end{align}
where the kernel $h:\mathbb{R}^2\rightarrow\mathbb{R}$ is a measurable function and $\theta=\E(h(Y,Y_1))$ with $Y\stackrel{D}{=}Y_1$ being an independent copy of $Y_1$ (also independent of $\{Z_{i,m}\}$). The easiest example for a corresponding monitoring statistic is $\Psi(m,k)=\Gamma(m,k)$. The centering parameter $\theta$ is the expectation under the null hypothesis for independent data, while it 
still approximates $\E(h(Y_i,Y_j))$ with increasing lag $j-i$ under appropriate weak dependency assumptions.
On the other hand, under alternatives the expectation (after the change) is given by $\E(h(Y,Z_{i,m}))$ so that changes can only be detected if $\E(h(Y,Z_{i,m}))\neq\theta$. The actual magnitude of the change is then given by
\begin{align}\label{eq_Delta}
	\Delta_m=\E(h(Y,Z_{i,m}))- \theta.
\end{align}
If we allow for local changes with $\Delta_m\to 0$, Theorem~\ref{TheoremH1w2} shows that changes are detectable with asymptotic power one if
\begin{align}\label{changedecay}
	\sqrt{m}|\Delta_m|\to\infty.
\end{align}

\subsection{Examples}\label{sec.ex.kernels}
In this section, we give three examples of suitable kernels, the first two will be considered in detail in the rest of the paper.
\subsubsection{Difference-of-means (DOM) statistic}\label{sec_dom}
The first papers on sequential change point statistics in the setup of \cite{Chu} all focus on the following difference-of-means (DOM) statistic

\begin{align}\label{cusumstat}
\Gamma_{D}(m,k)=\sum_{j=m+1}^{m+k} \left(\overline{X}_m-X_j\right)
=\frac{1}{m}\sum_{i=1}^m\sum_{j=m+1}^{m+k}\left(X_i-X_j\right).\end{align}
The second representation shows that the DOM statistic is a special case of the above $U$-statistics with the DOM kernel
\begin{align}\label{kernelC}
h_{D}(x_1,x_2)=x_1-x_2,
\end{align}
such that $\theta^{D}=\E h_{D}(Y,Y_1)=0$.
Under the alternative, we obtain
\begin{align}\label{delta.cusum}
\Delta^{D}_m=\E h_{D}(Y,Y_1+d_m)-\theta^{D}=-d_m\neq 0.
\end{align}
Consequently, \eqref{changedecay} is satisfied iff $\sqrt{m}|d_m|\rightarrow\infty$ as has already been pointed out by several authors.

\subsubsection{Wilcoxon statistic}\label{sec_wil}
In the classical two-sample situation, the Wilcoxon  kernel function
\begin{align}\label{kernelW}
h_W(x_1,x_2)=1_{\{x_1<x_2\}}
\end{align}
is frequently used. In this case, if $Y_1$ is continuous with density $f_Y$ and distribution function $F_Y$, it holds
\begin{align*}
\theta^W=&\E h_W(Y,Y_1)=P(Y<Y_1) 
=\E(F_Y(Y))=\frac{1}{2}.
\end{align*}
Consequently, the Wilcoxon statistic is given by
$$\Gamma_W(m,k)=\frac{1}{m}\sum_{i=1}^m\sum_{j=m+1}^{m+k}\left(1_{\{X_i<X_j\}}-1/2\right).$$
Under the alternative, it holds for $d_m>0$
\begin{align}
	\Delta_m^{W}&=\E h_W(Y,Y_1+d_m)-\theta^W
=P(Y<Y_1)+P(Y_1\leq Y<Y_1+d_m)-\theta^W\notag\\
&=P(Y_1\leq Y<Y_1+d_m)=\int_{-\infty}^{\infty}f_Y(z)\left(F_Y(z+d_m)-F_Y(z)\right)dz>0,
\label{eq_Delta_Wil}\end{align}
because $Y$ is a continuous random variable.
An analogous result is obtained for $d_m<0$.

\subsubsection{Further examples}
As pointed out in \cite{gombay} the kernel $h(x_1,x_2)=\frac 12(x_1+x_2)$ can be used for the detection of a change from a distribution $F$ that is symmetric around zero to a distribution $G$ that is not symmetric around zero with $Y_i\sim F$ and $Z_i\sim G$. This includes location shifts away from zero but also many shifts in shape.
The expected values of the kernel function under the null hypothesis and under the alternative are given by $\theta=\E \left(\frac 12(Y+Y_1)\right)=0$ and $\theta^*=\E \left(\frac 12(Y+Z_1)\right)=\E(Z_1).$\\

The procedures proposed in this paper require $\theta$ to be known and are thus restricted to kernel functions whose expected value under the null hypothesis does not depend on the (unknown) distribution of $Y_1$. Hence, for certain change point problems it might be useful to consider differences of one-sample $U$-statistics instead which is, however, outside the scope of this paper.

\subsection{Monitoring schemes}\label{monscheme}\label{sec.scheme}
The monitoring statistic as given in 
(\ref{detstat})  with the DOM kernel is the one originally proposed by \cite{Chu}.  
Because this monitoring scheme uses all observations of the monitoring period for the comparison with the historic data set, it uses many observations still following the null distribution if changes occur rather late -- resulting in smaller power and longer detection delay.
Therefore, the following adaptations ($\Psi_{2,3}$) of the monitoring scheme have been proposed in the literature focusing more on the recent monitoring observations for the comparison:
 $$
\begin{array}{ll}
\Psi_1(m,k):=\Gamma(m,k)=\frac{1}{m}\sum_{i=1}^m\sum_{j=m+1}^{m+k}(h(X_i,X_j)-\theta)&\mbox{CUSUM},\\
&\\
\Psi_2(m,k):=\Gamma(m,k)-\Gamma(m,\lfloor mb\rfloor)=\frac{1}{m}\sum_{i=1}^m\sum_{j=m+\lfloor kb\rfloor+1}^{m+k}(h(X_i,X_j)-\theta)&\mbox{mMOSUM},\\
&\\
\Psi_3(m,k):=\sup_{0\leq l\leq k}\left|\Gamma(m,k)-\Gamma(m,l)\right|&\\
\phantom{\Gamma_3(m,k)}\,\,=\sup_{0\leq l\leq k}\left|\frac{1}{m}\sum_{i=1}^m\sum_{j=m+l+1}^{m+k}(h(X_i,X_j)-\theta)\right|&\mbox{Page-CUSUM}.
\end{array}
$$
The fixed tuning parameter $b\in (0,1)$ in the modified MOSUM (mMOSUM) determines the percentage of the earlier observations that are discarded, whereas the Page-CUSUM does not require an a priori choice of such a parameter.\\
The modified MOSUM (mMOSUM) has been proposed in \cite{chen2010modified} and the Page-CUSUM in \cite{fremdt2015page}, both for the linear model. \cite{horvath2008performance} and  \cite{aue2012reaction} consider a standard MOSUM statistic for the mean change model, however, in a comparison of all of those statistics \cite{kirch2018modified} conclude that the MOSUM procedure has noticeable power problems although its detection delay is very small for those changes that are detected.
Because we are interested in a reliable testing procedure in the sense of having a large power, we concentrate on the above three monitoring schemes in this paper.

\vspace{2mm}

The paper is organized as follows: In Section \ref{sec.as}, we derive the asymptotic theory for the above monitoring schemes. The limit distribution of the test statistics under the null hypothesis provided in Section \ref{sec.asH0} allws to control the asymptotic type-I-error. In Section \ref{sec.asH1}, we show that the proposed procedures have asymptotic power one. The asymptotic results are based on  general assumptions which are examined for specific kernels and dependency structures in Section \ref{sec.exass}. The finite sample behavior of the sequential procedures is investigated in a simulation study in Section \ref{sec.sim} where we compare different kernels and monitoring schemes. A data example is provided in Section \ref{sec.data}. Proofs are postponed to an Appendix.

\section{Asymptotics}\label{sec.as}
We provide the theory for the general $U$-statistics stated in Section \ref{sec.scheme}. First, in Section \ref{sec.asH0}, we derive the asymptotics under the null hypothesis in order to control the type-I error. Consistency of the procedures is shown in Section \ref{sec.asH1}. The assumptions on the dependency structure of the observed time series depend on the kernel function and are rather abstract. In Section~\ref{sec.exass} we derive them explicitly for both  independent observations as well as  functionals of mixing processes.

\subsection{Asymptotics under the null hypothesis}\label{sec.asH0}
In order to state the assumptions we need to introduce Hoeffding's decomposition which is a well-known tool when dealing with $U$-statistics. As proposed by \cite{Hoeff} the kernel function is decomposed under the null hypothesis as follows:
		\begin{align}\label{Hdec}
		h(x_1,x_2)=\theta+h_1(x_1)+h_2(x_2)+r(x_1,x_2)\end{align}
				with 
				\begin{align*}
				&\theta=\E(h(X_1,X_2)),\qquad
				h_1(x_1)=\E(h(x_1,X_2))-\theta,\qquad				h_2(x_2)=\E(h(X_1,x_2))-\theta,\notag\\
			&	r(x_1,x_2)=h(x_1,x_2)-h_1(x_1)-h_2(x_2)-\theta,
				\end{align*}
			
				where $X_1$ and $X_2$ are independent random variables with the same distribution.
				In particular,
				\begin{align}\label{hcenter}
				\E\left(h_1(X_1)\right)=\E\left(h_2(X_2)\right)=0.
				\end{align}
				For the DOM kernel as in Section~\ref{sec_dom} we obtain
\begin{align}\label{eq_h_dom}
&h^D_1(x_1)=x_1-E(X_2),\quad 
h^D_2(x_2)=\E(X_1)-x_2=-h^D_1(x_2),\quad r^D(x_1,x_2)=0.
\end{align}

For the Wilcoxon kernel for continuous random variables $X_1$ and $X_2$ with distribution function $F$ is given by
\begin{align}\label{eq_h_wil}
&h^W_1(x_1)=\frac 12-F(x_1),\quad h^W_2(x_2)=-h^W_1(x_2),\quad r^W(x_1,x_2)=1_{\{x_1<x_2\}}+F(x_1)-F(x_2)-\frac 12.
\end{align}

We provide the asymptotic results for a general class of weight functions with the following regularity conditions that have also been used in \cite{Est} and \cite{Weber}:
\begin{assumption}\label{regw} Let the weight function satisfy
$ $
\begin{itemize}
\item[(i)] $w(m,k)=m^{-1/2}\tilde{w}(m,k),$ where
$\tilde{w}(m,k)=\rho\left(\frac km\right)$ for $k>l_m$ with $\frac{l_m}{m}\rightarrow 0$ and $\tilde{w}(m,k)=0$ for $k\leq l_m.$ The function $\rho:(0,\infty)\rightarrow\mathbb{R}^+$ is positive and continuous.
\end{itemize}
The following two limits exist and it holds:
\begin{itemize}
\item[(ii)] $\lim_{t\rightarrow 0}t^{\gamma}\rho(t)<\infty$ for some $0\leq\gamma<\frac 12.$
\item[(iii)] $\lim_{t\rightarrow\infty}t\rho(t)<\infty.$
\end{itemize}
\end{assumption}

Part (i) allows to start the monitoring only after some observations have been collected. This is inmportant in order to avoid false alarms at the very beginning of the monitoring period. Such early false alarms often occur  as the first values of the monitoring statistic are quite volatile due to the small monitoring sample (but are asymptotically negligible). 

The regularity conditions of the weight function at zero and infinity in  (ii) and (iii) are needed to control the asymptotic behavior of the monitoring statistic at 0 and inifinity.

The following choice  fulfills these assumptions and is often used in the literature as it leads to nice limit  distributions as indicated in Corollary \ref{simple1}:
\begin{align}\label{exw}
\rho(t)=\frac{1}{1+t}\left(\frac{1+t}{t}\right)^{\gamma},
\quad 0\leq\gamma<\frac 12.
\end{align}

Instead of using explicit assumptions on the underlying dependency structure of the observed time series we work with the following high-level regularity assumptions allowing for easy future extensions to different dependency assumptions. In Section~\ref{sec.exass} we show their validity for both  independent observations as well as  functionals of mixing processes.
\begin{assumption}\label{regass}
Let $\{Y_i\}_{i\in\mathbb{Z}}$ be a stationary time series that fulfills the following assumptions for a given kernel function $h$.
\begin{enumerate}
\item[(i)]
$\E\left(\left|\sum_{i=1}^m\sum_{j=k_1}^{k_2}r(Y_i,Y_j)\right|^2\right)\leq u(m)(k_2-k_1+1)\quad\mbox{for all } m+1\leq k_1\leq k_2$\\
with  $\frac{u(m)}{m^{2-2\gamma}}\log(m)^2\rightarrow 0$ and $\gamma$ as in Assumption \ref{regw}.\\
\item[(ii)] The following functional central limit theorem holds for any $T>0$
$$\left\{\frac{1}{\sqrt{m}}\sum_{i=1}^{[mt]}\left(h_1(Y_i),h_2(Y_i)\right):0< t\leq T\right\}\stackrel{D}{\rightarrow}\left\{\left(\tilde{W}_1(t),\tilde{W}_2(t)\right):0< t\leq T\right\},$$
where $\left\{\left(\tilde{W}_1(t),\tilde{W}_2(t)\right):0< t\leq T\right\}$ is a bivariate Wiener process with mean zero and covariance matrix 
$$
\Sigma=\begin{pmatrix}
\sigma_1^2 & \rho\notag\\
\rho &\sigma_2^2
\end{pmatrix}
$$
with \begin{align}\label{defsig}
\sigma_1^2=\sum_{h\in\mathbb{Z}}\Cov(h_1(Y_0),h_1(Y_h)),\quad \sigma_2^2=\sum_{h\in\mathbb{Z}}\Cov(h_2(Y_0),h_2(Y_h)).
\end{align}
\item[(iii)] For all $0\leq\alpha<\frac{1}{2}$ the following H\'ajek-R\'enyi-type inequality holds
$$
\sup_{1\leq k\leq m}\frac{1}{m^{\frac{1}{2}-\alpha}k^{\alpha}}\left|\sum_{j=1}^{k}h_2(Y_j)\right|=O_P\left(1\right)\quad\mbox{ as }m\rightarrow\infty.
$$
\item[(iv)]The following H\'ajek-R\'enyi-type inequality holds uniformly in $m$ for any sequence $k_m>0$
$$
\sup_{k\geq k_m}\frac{1}{k}\left|\sum_{j=1}^{k}h_2(Y_j)\right|=O_P\left(\frac{1}{\sqrt{k_m}}\right)\quad\mbox{as }k_m\rightarrow\infty.
$$
\end{enumerate}
\end{assumption}
Assumption~\ref{regass} (i) is required to prove the asymptotic negligibility of the remainder term, while (ii) is a classic functional central limit theorem providing the limit distribution. Note that  $\sigma_j^2$, $j=1,2$, are the corresponding long-run variances which coincide with the variances only in the independent case. In
this case, we obtain
\begin{align}\label{eq_var_dom} 
\sigma_{1,D}^2=\sigma_{2,D}^2=\Var(Y_1)
\end{align}
with \eqref{eq_h_dom} for the DOM kernel and
\begin{align}\label{eq_var_wil} 
\sigma_{1,W}^2=\sigma_{2,W}^2=\Var\left(F(Y_1)\right)=\frac{1}{12}
\end{align}
with \eqref{eq_h_wil} for the Wilcoxon kernel if $Y_1$ is continuous with distribution function $F$.
This shows in particular, that in the independent case the Wilcoxon statistic does not require an estimator of the variance. Unfortunately, this is no longer true in the time series case as the long-run variance of the Wilcoxon kernel is not distribution-free for continuous random variables and thus requires an estimator.

The H\'ajek-R\'enyi-type inequalities in (iii) and (iv) are also standard tools in this context that hold for many dependency concepts (see e.g. Section B.1 in \cite{kdiss}), they are used in combination with Assumption~\ref{regw} (ii) -- (iii) to control the (asymptotic) behavior of the statistic at the beginning and end of the monitoring period.

\begin{theorem}\label{as.H0}
Let the regularity conditions given in Assumption \ref{regw} and \ref{regass} be fulfilled. Then, it holds under $H_0$, as $m\rightarrow\infty$,
\begin{itemize}
\item[(i)] $\sup_{k\geq 1}w(m,k)\left|\Psi_1(m,k)\right|\stackrel{\mathcal{D}}{\rightarrow}\sup_{t>0}\rho(t)\left|\sigma_2W_2(t)+t\sigma_1W_1(1)\right|$,
\item[(ii)] $\sup_{k\geq 1}w(m,k)\left|\Psi_2(m,k)\right|\stackrel{\mathcal{D}}{\rightarrow}\sup_{t>0}\rho(t)\left|\sigma_2(W_2(t)-W_2(tb))+t(1-b)\sigma_1W_1(1)\right|$,
\item[(iii)] $
\sup_{k\geq 1}w(m,k)\left|\Psi_3(m,k)\right|\stackrel{\mathcal{D}}{\rightarrow}\sup_{t>0}\rho(t)\sup_{0<s\leq t}\left|\sigma_2(W_2(t)-W_2(s))+(t-s)\sigma_1W_1(1)\right|$,
\end{itemize}
for two independent standard Wiener processes $\{W_1(t):t>0\}$ and $\{W_2(t):t>0\}$.
\end{theorem}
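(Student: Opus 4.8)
The plan is to reduce each statistic to a sum of two asymptotically independent linear terms via Hoeffding's decomposition \eqref{Hdec}, to show that the nonlinear remainder is uniformly negligible after weighting, and finally to read off the limits by the continuous mapping theorem. Under $H_0$ we have $X_i=Y_i$, so inserting \eqref{Hdec} into \eqref{detstat} and noting that the inner summation over $j$ produces $k$ copies of $h_1(Y_i)$ while the prefactor $1/m$ cancels the outer sum in the $h_2$-term gives
\begin{align*}
\Gamma(m,k)=\frac{k}{m}\sum_{i=1}^m h_1(Y_i)+\sum_{j=m+1}^{m+k}h_2(Y_j)+\frac1m\sum_{i=1}^m\sum_{j=m+1}^{m+k}r(Y_i,Y_j).
\end{align*}
Writing $t=k/m$ and $w(m,k)=m^{-1/2}\rho(t)$, the first term becomes $\rho(t)\,t\,\bigl(m^{-1/2}\sum_{i=1}^m h_1(Y_i)\bigr)$ and the second $\rho(t)\,\bigl(m^{-1/2}\sum_{j=m+1}^{m+k}h_2(Y_j)\bigr)$.

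For the two linear terms I would invoke the joint functional central limit theorem in Assumption~\ref{regass}(ii). Evaluated on the historic window this gives $m^{-1/2}\sum_{i=1}^m h_1(Y_i)\stackrel{D}{\rightarrow}\tilde W_1(1)$, while on the monitoring window $m^{-1/2}\sum_{j=m+1}^{m+k}h_2(Y_j)\stackrel{D}{\rightarrow}\tilde W_2(1+t)-\tilde W_2(1)$, which by stationarity of the increments is a Wiener process in $t$ with variance $\sigma_2^2 t$. The crucial point is that, although $h_1$ and $h_2$ may be correlated, the historic and the monitoring statistic are \emph{asymptotically independent}: since $\Cov(\tilde W_1(s),\tilde W_2(u))=\rho\min(s,u)$, one computes $\Cov(\tilde W_1(1),\tilde W_2(1+t)-\tilde W_2(1))=\rho-\rho=0$, so the jointly Gaussian limits $\tilde W_1(1)$ and the monitoring increment are uncorrelated, hence independent. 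Rescaling by $\sigma_1,\sigma_2$ lets me write them as $\sigma_1 W_1(1)$ and $\sigma_2 W_2(t)$ with $W_1,W_2$ independent standard Wiener processes, reproducing the integrand in (i).

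The main obstacle is showing that the weighted remainder is uniformly negligible, i.e.
\begin{align*}
\sup_{k\geq 1}w(m,k)\Bigl|\tfrac1m\sum_{i=1}^m\sum_{j=m+1}^{m+k}r(Y_i,Y_j)\Bigr|\stackrel{P}{\rightarrow}0.
\end{align*}
Here Assumption~\ref{regass}(i) supplies the increment bound $\E|\sum_{i=1}^m\sum_{j=k_1}^{k_2}r(Y_i,Y_j)|^2\leq u(m)(k_2-k_1+1)$. I would split the monitoring range $\{k\geq1\}$ into dyadic blocks and apply a M\'oricz-type maximal inequality on each, so that the second moment of the weighted supremum over $k\leq K$ is controlled, up to constants, by $m^{-1}\rho(K/m)^2(\log K)^2 u(m)K/m^2$; the factor $(\log m)^2$ in Assumption~\ref{regass}(i) is exactly what absorbs the union bound over the $O(\log m)$ dyadic blocks, while the weight regularity in Assumption~\ref{regw}(ii)--(iii) keeps $\rho(t)$ under control at both ends. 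Combining these estimates with $u(m)m^{2\gamma-2}(\log m)^2\to0$ yields the claim.

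It remains to pass from the weighted partial-sum process to the supremum limit. On each compact set $[\delta,T]$ the joint convergence of the two linear parts together with the continuous mapping theorem gives convergence of $\sup_{\delta\leq t\leq T}$ to the stated functional. The endpoint contributions are then shown to be asymptotically negligible: for $t\to0$ I would combine the growth bound $t^{\gamma}\rho(t)=O(1)$ from Assumption~\ref{regw}(ii) with the H\'ajek-R\'enyi inequality of Assumption~\ref{regass}(iii), and for $t\to\infty$ the bound $t\rho(t)=O(1)$ from Assumption~\ref{regw}(iii) with Assumption~\ref{regass}(iv); letting $\delta\to0$ and $T\to\infty$ extends the supremum to all $t>0$. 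Parts (ii) and (iii) follow from the identical scheme applied to $\Gamma(m,k)-\Gamma(m,\lfloor kb\rfloor)$ respectively $\Gamma(m,k)-\Gamma(m,l)$: the $h_1$-coefficient $t$ is replaced by $t(1-b)$ respectively $t-s$ (with $s=l/m$) and the $h_2$-limit $W_2(t)$ by the increment $W_2(t)-W_2(tb)$ respectively $W_2(t)-W_2(s)$, while the remainder and endpoint arguments carry over verbatim, the extra inner supremum over $l$ in the Page statistic being handled at the process level by the same functional limit theorem before the continuous mapping theorem is applied.
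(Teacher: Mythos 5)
Your proposal is correct and follows essentially the same route as the paper's own proof: Hoeffding decomposition into the two weighted linear terms plus remainder, negligibility of the weighted remainder via a M\'oricz-type maximal inequality with dyadic blocking (the paper's Lemma~\ref{r.conv.generalg}), joint convergence of the linear parts on compacts via Assumption~\ref{regass}(ii) with the same independence observation for $\tilde W_1(1)$ and the post-time-one increments of $\tilde W_2$, and endpoint control at $0$ and $\infty$ through Assumption~\ref{regw}(ii)--(iii) combined with the H\'ajek--R\'enyi bounds of Assumption~\ref{regass}(iii)--(iv). The only detail left implicit, which the paper spells out via the law of the iterated logarithm and the modulus of continuity, is that the limit process itself also contributes negligibly outside $[\delta,T]$, but this is standard and your scheme accommodates it.
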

While the above limit distributions can already be used in combination with the Wilcoxon kernel and independent data, where the variance is completely known (see \eqref{eq_var_wil}), this is not the case for the CUSUM statistic or dependent data.

In such cases, the unknown parameters can be estimated consistently based on the historic data set, leading to the following pivotal limit distribution. Furthermore, this limit distribution can now be written as a supremum over the compact interval $[0,1]$ rather than a supremum over the positive real numbers.

\begin{corollary}\label{simple1}
Let $\{W(t):t>0\}$ be a standard Wiener processes.
\begin{itemize}
	\item[a)] If $\sigma_1=\sigma_2=:\sigma$ and $\hat{\sigma}_m\stackrel{P}{\rightarrow}\sigma$, then	
\begin{itemize}
\item[(i)] $\frac{1}{\hat{\sigma}_m}\sup_{k\geq 1}w(m,k)\left|\Psi_1(m,k)\right|\stackrel{\mathcal{D}}{\rightarrow}\sup_{0<t<1}\rho\left(\frac{t}{1-t}\right)\frac{|W(t)|}{1-t}$,
\item[(ii)] $\frac{1}{\hat{\sigma}_m}\sup_{k\geq 1}w(m,k)\left|\Psi_2(m,k)\right|\stackrel{\mathcal{D}}{\rightarrow}\sup_{0<t<1}\rho\left(\frac{t}{1-t}\right)\left|\frac{W(t)}{1-t}-(1-t(1-b))\frac{W\left(\frac{tb}{1-t(1-b)}\right)}{1-t}\right|$,
\item[(iii)] $\frac{1}{\hat{\sigma}_m}\sup_{k\geq 1}w(m,k)\left|\Psi_3(m,k)\right|\stackrel{\mathcal{D}}{\rightarrow}\sup_{0<t<1}\rho\left(\frac{t}{1-t}\right)\sup_{0<s\leq t}\left|\frac{W\left(t\right)}{1-t}-\frac{W\left(s\right)}{1-s}\right|$.
\end{itemize}
For a weight function as in \eqref{exw} these limit distributions further simplify as in this case \mbox{$\rho\left( \frac{t}{1-t} \right)=\frac{1}{t^{\gamma}}\,(1-t)$.}
%
%
\item[b)] For $\sigma_1\neq\sigma_2$ and $\hat{\sigma}_{1,m}\stackrel{P}{\rightarrow}\sigma_1$ as well as $\hat{\sigma}_{2,m}\stackrel{P}{\rightarrow}\sigma_{2}$ it holds
\begin{itemize}
\item[(i)] $\sup_{k\geq 1}\frac{\hat{\sigma}_{1,m}}{\sqrt{m}\left(\hat{\sigma}_{2,m}^2+\hat{\sigma}_{1,m}^2\frac{k}{m}\right)}\left|\Psi_1(m,k)\right|\stackrel{\mathcal{D}}{\rightarrow}\sup_{0< t< 1}\left|W(t)\right|$,
\item[(ii)] $\sup_{k\geq 1}\frac{\hat{\sigma}_{1,m}}{\sqrt{m}\left(\hat{\sigma}_{2,m}^2+\hat{\sigma}_{1,m}^2\frac{k}{m}\right)}\left|\Psi_2(m,k)\right|\stackrel{\mathcal{D}}{\rightarrow}\sup_{0<t<1}\left|W(t)-(1-t(1-b))W\left(\frac{tb}{1-t(1-b)}\right)\right|$,
\item[(iii)] $\sup_{k\geq 1}\frac{\hat{\sigma}_{1,m}}{\sqrt{m}\left(\hat{\sigma}_{2,m}^2+\hat{\sigma}_{1,m}^2\frac{k}{m}\right)}\left|\Psi_3(m,k)\right|\stackrel{\mathcal{D}}{\rightarrow}\sup_{0<t<1}\sup_{0<s\leq t}\left|W\left(t\right)-\frac{1-t}{1-s}W\left(s\right)\right|$.
\end{itemize}
\end{itemize}
\end{corollary}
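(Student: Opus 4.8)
The plan is to deduce both parts from Theorem~\ref{as.H0} by combining a Slutsky-type argument (to absorb the variance estimation) with a purely deterministic re-expression of the limiting random variable via a time substitution and a covariance computation. In each case the limit in Theorem~\ref{as.H0} is already a fixed weighted-supremum functional of the two independent standard Wiener processes $W_1,W_2$, so it suffices to exhibit a new standard Wiener process $W$ and a deterministic, increasing change of variables mapping $(0,\infty)$ onto $(0,1)$ under which the limit takes the stated pivotal form. The identification of $W$ is made at the level of process distributions: the relevant object is a centered Gaussian process, so matching its covariance function to $\min(\cdot,\cdot)$ after the time change proves it is a standard Brownian motion, and continuity of the weighted-sup functional (guaranteed near $0$ and $\infty$ by Assumption~\ref{regw}(ii)--(iii)) transfers the process identity to the supremum.

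For part a) I would first apply Slutsky: since $\hat\sigma_m\stackrel{P}{\to}\sigma$ and the unnormalized suprema $\sup_{k\ge1}w(m,k)|\Psi_j(m,k)|$ converge by Theorem~\ref{as.H0}, dividing by $\hat\sigma_m$ and using $\sigma_1=\sigma_2=\sigma$ cancels $\sigma$ and leaves the limits as functionals of $G(t):=W_2(t)+tW_1(1)$; concretely the CUSUM, mMOSUM and Page limits become weighted suprema of $|G(t)|$, $|G(t)-G(tb)|$ and $\sup_{0<s\le t}|G(t)-G(s)|$. A direct computation gives $\Cov(G(s),G(t))=\min(s,t)+st$. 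Introducing $t=\frac{u}{1-u}$ and setting $W(u):=(1-u)\,G\!\left(\frac{u}{1-u}\right)$, one checks $\Cov(W(u),W(v))=\min(u,v)$, so $W$ is standard and $G(t)=\frac{W(u)}{1-u}$ with $\rho(t)=\rho\!\left(\frac{u}{1-u}\right)$. Substituting these identities into the three limits, and using that the ordering $s\le t$ (resp. the fixed fraction $b$) is preserved by the increasing map, yields exactly (i)--(iii); the stated simplification for the weight \eqref{exw} is the one-line evaluation $\rho\!\left(\frac{t}{1-t}\right)=\frac{1-t}{t^\gamma}$.

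For part b) the outer division by a single estimator is replaced by a $k$-dependent normalization. I would recognize $\frac{\hat\sigma_{1,m}}{\sqrt m(\hat\sigma_{2,m}^2+\hat\sigma_{1,m}^2 k/m)}=m^{-1/2}\rho_m(k/m)$ with $\rho_m(t)\to\rho(t):=\frac{\sigma_1}{\sigma_2^2+\sigma_1^2 t}$, and verify that this limiting $\rho$ satisfies Assumption~\ref{regw} (with $\gamma=0$), so that the deterministic-weight version converges by Theorem~\ref{as.H0} to a functional of $N(t):=\sigma_2 W_2(t)+t\sigma_1 W_1(1)$, with $\Cov(N(s),N(t))=\sigma_2^2\min(s,t)+\sigma_1^2 st$. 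Setting $H(t):=\frac{\sigma_1 N(t)}{\sigma_2^2+\sigma_1^2 t}$ gives $\Cov(H(s),H(t))=\frac{\sigma_1^2\min(s,t)}{\sigma_2^2+\sigma_1^2\min(s,t)}$, so $H(t)=W(u)$ is a standard Wiener process under $u=\frac{\sigma_1^2 t}{\sigma_2^2+\sigma_1^2 t}$ (and $v$ the image of $s$). The identities $\sigma_2^2+\sigma_1^2 t=\frac{\sigma_2^2}{1-u}$ and $\frac{\sigma_2^2+\sigma_1^2 s}{\sigma_2^2+\sigma_1^2 t}=\frac{1-u}{1-v}$ convert the three limits into the pivotal forms (i)--(iii); for the mMOSUM one additionally checks $u(tb)=\frac{ub}{1-u(1-b)}$ and $\frac{\sigma_2^2+\sigma_1^2 tb}{\sigma_2^2+\sigma_1^2 t}=1-u(1-b)$.

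The main obstacle is the uniform-in-$k$ replacement of the estimated long-run variances by their limits in part b): unlike part a), the estimators sit \emph{inside} the supremum over the unbounded monitoring range, so Slutsky cannot be applied directly. I would bound the difference between the statistic with estimated weights $\rho_m$ and with the deterministic limit $\rho$ by $\sup_{k\ge1}|\rho_m(k/m)-\rho(k/m)|\,m^{-1/2}|\Psi_\cdot(m,k)|$, splitting the range into an initial part, a central compact part and a tail, and using the H\'ajek--R\'enyi inequalities of Assumption~\ref{regass}(iii)--(iv) to show the weighted statistic is uniformly $O_P(1)$ at the ends while $\rho_m\to\rho$ uniformly on compacts. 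This uniform negligibility is the only genuinely technical ingredient; the remaining steps are the deterministic covariance computations and changes of variables sketched above.
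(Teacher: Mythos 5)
Your proposal is correct and follows essentially the same route as the paper's proof: part a) is Theorem~\ref{as.H0} plus Slutsky combined with the covariance-matching time change $\{W_2(t)+tW_1(1):t>0\}\stackrel{\mathcal{D}}{=}\{(1+t)W(t/(1+t)):t>0\}$, and part b) applies Theorem~\ref{as.H0} with the deterministic weight $\rho(t)=\sigma_1/(\sigma_2^2+\sigma_1^2 t)$ (satisfying Assumption~\ref{regw} with $\gamma=0$) followed by the substitution $u=\sigma_1^2 t/(\sigma_2^2+\sigma_1^2 t)$, exactly as in the paper. The single point of divergence is the uniform-in-$k$ replacement of the estimated variances in b): where you propose an additive bound with range splitting and renewed appeal to the H\'ajek--R\'enyi inequalities, the paper multiplies and divides by the true weight and shows $\sup_{k\geq 1}\left|\frac{\hat{\sigma}_{1,m}}{\sigma_1}\cdot\frac{\sigma_2^2+\sigma_1^2 k/m}{\hat{\sigma}_{2,m}^2+\hat{\sigma}_{1,m}^2 k/m}-1\right|=o_P(1)$ by a direct algebraic estimate on the ratio of the two linear functions, which disposes of all $k$ at once; your version also works but is more laborious.
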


For both the DOM- and the Wilcoxon kernel it holds $h_2(y)=-h_1(y)$ and thus  $\sigma_1=\sigma_2$.

\subsection{Consistency under alternatives}\label{sec.asH1}
In the following we show that the proposed procedures have asymptotic power one (for $m\to\infty$) under alternatives, i.e.\ the procedure will eventually stop under alternatives. 

In the situation of \eqref{meanmodel} we get with $\theta_m^*:=\E(h(Y,Z_{i,m})=\Delta_m+\theta$ (see \eqref{eq_Delta}) for $k>k^*$ 
\begin{align*}
\Gamma(m,k)=&\frac{1}{m}\sum_{i=1}^m\sum_{j=m+1}^{m+k^*}(h(Y_i,Y_j)-\theta)
+\frac{1}{m}\sum_{i=1}^m\sum_{j=m+k^*+1}^{m+k}(h(Y_i,Z_{j,m})-\theta_m^*)+(k-k^*)\Delta_m.
\end{align*}

The first summand is the same term as under the null hypothesis which has already been analyzed in the previous section. The second summand is a related two-sample $U$-statistic for which the two samples are no longer generated by the same distribution, while the third summand is the signal term.

In order to control the second term, consider the following version of Hoeffding's decomposition:
\begin{align}\label{hoeffh1}
h(y,z)=\theta_m^*+h^*_{1,m}(y)+h^*_{2,m}(z)+r^*_m(y,z)
\end{align}
where  we define for independent random variables $Y\stackrel{\mathcal{D}}{=}Y_1$ and $Z_m\stackrel{\mathcal{D}}{=}Z_{m,1}$
\begin{align*}
&h^*_{1,m}(y)=\E h(y,Z_m)-\theta_m^*, \quad\theta_m^*=\E h(Y,Z_m),\quad 
h^*_{2,m}(z)=h_2(z)-\Delta_m,\quad \Delta_m=\theta_m^*-\theta,
\\
&r_m^*(y,z)=h(y,z)-h^*_{1,m}(y)-h^*_{2,m}(z)-\theta_m^*.
\end{align*}
 Again, in this decomposition, $h_{1,m}^*(Y)$, $h_{2,m}^*(Z_m)$ as well as $r_m^*(Y,Z_m)$ are centered random variables.

 For the DOM kernel in model \eqref{meanex} this results in
\begin{align*}
&h^{*D}_{1,m}(y)
=y-\E(Y),\quad
h^{*D}_{2,m}(z)=
\E(Y)-z+d_m,\quad
r^{*D}(y,z)
=0.
\end{align*}
For the Wilcoxon kernel, model \eqref{meanex} and a continuous $Y$ with distribution function $F$  it holds with \eqref{eq_Delta_Wil}
\begin{align*}
&h_{1,m}^{*W}(y)
=\frac 12-F(y-d_m)-\Delta_m^W,\quad
h_{2,m}^{*W}(z)
=F(z)-\Delta_m^W+\frac 12,\quad\\
&r^{*W}(y,z)
=1_{\{y<z\}}+F(y-d_m)-F(z)+\Delta_m^W-\frac 32
\end{align*}

In order to obtain asymptotic power one, we need to impose some additional assumptions on the weight function, which are again fulfilled for the weight function  in \eqref{exw}.
\begin{assumption}\label{altrho}
\begin{itemize}
\item[(i)] If  $\frac{k^*}{m}\rightarrow\infty$, assume that $\liminf_{t\rightarrow\infty}t\rho(t)>0.$
\item[(ii)] If $\frac{k^*}{m}=O(1),$ i.e. $\frac{k^*}{m}<\nu$ for all $m\geq 1$ for some $\nu>0$, assume that there exist $t_0>\nu, \epsilon>0$ such that $\rho(t)>0$ for all $t\in (t_0-\epsilon, t_0+\epsilon)$.
\item[(iii)] If $\frac{k^*}{m}=O(1),$ i.e. $\frac{k^*}{m}<\nu$ for all $m\geq 1$ for some $\nu>0$, assume that there exist $t_0>\nu, \epsilon>0$ such that $\rho(t)>0$ for all $t\in \left(\frac{t_0}{b}-\epsilon, \frac{t_0}{b}+\epsilon\right)$.
\end{itemize}
\end{assumption}

We assume that the time series before the change fulfills the regularity conditions under the null hypothesis as given in Assumption \ref{regass}. 
For the time series after the change $\{Z_{i,m}\}$ the following weaker assumptions are required.
\begin{assumption}\label{regassH1}
Let $\{Y_i\}_{i\in\mathbb{Z}}$ and $\{Z_{i,m}\}_{i\in\mathbb{Z}}$ be stationary time series that fulfill the following assumptions for a given kernel function $h$.
\begin{enumerate}
\item[(i)]
$\sum_{i=1}^m\sum_{j=k_1}^{k_2}r_m^*(Y_i,Z_{j,m})=O_P( \sqrt{m\,(k_2-k_1+1)\,\max(m,k_2-k_1+1)})$ for all sequences $k_1(m)\le k_2(m)$.
\item[(ii)] $\frac{1}{\sqrt{m}}\sum_{i=1}^mh^*_{1,m}(Y_i)=O_p(1)$ as $m\rightarrow\infty$.
\item[(iii)] $\frac{1}{\sqrt{k_m}}\sum_{j=m+k^*+1}^{m+k^*+k_m}h^*_{2,m}(Z_{j,m})=O_p(1)$ as $k_m\rightarrow\infty$.
\end{enumerate}
\end{assumption}

The following theorem shows that the monitoring will eventually stop and the null hypothesis be rejected with asymptotic probability one under the alternative.
\begin{theorem}\label{TheoremH1w2}
	Let the regularity conditions given in Assumptions \ref{regw}, \ref{regass} (for $\{Y_i\}$) and \ref{regassH1}  be satisfied. Furthermore assume that $\sqrt{m}|\Delta_m|\rightarrow\infty$ and that Assumption~\ref{altrho} (i) holds.
	Then, it holds under the alternative $$\sup_{k\geq 1}w(m,k)\left|\Psi_j(m,k)\right|\stackrel{P}{\rightarrow}\infty$$ for $j=1,3$ if Assumption \ref{altrho} (ii) as well as for $j=2$ if Assumption~\ref{altrho} (iii)  is fulfilled.
%
\end{theorem}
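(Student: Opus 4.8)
The plan is to bound each supremum from below by the weighted statistic evaluated at a single, deterministically chosen monitoring time $k_m>k^*$, and to show that at this time the deterministic signal term dominates all stochastic fluctuations. Throughout I would use the decomposition of $\Gamma(m,k)$ displayed just before the theorem, splitting it for $k>k^*$ into the null-type term $A(m,k^*)=\frac{1}{m}\sum_{i=1}^m\sum_{j=m+1}^{m+k^*}(h(Y_i,Y_j)-\theta)$, the two-sample term $B(m,k^*,k)=\frac{1}{m}\sum_{i=1}^m\sum_{j=m+k^*+1}^{m+k}(h(Y_i,Z_{j,m})-\theta_m^*)$, and the signal $(k-k^*)\Delta_m$. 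The term $A$ is expanded by Hoeffding's decomposition \eqref{Hdec} and $B$ by the post-change decomposition \eqref{hoeffh1}. For the Page statistic $\Psi_3$ I would instead lower-bound $\Psi_3(m,k_m)\ge|\Gamma(m,k_m)-\Gamma(m,k^*)|=|B(m,k^*,k_m)+(k_m-k^*)\Delta_m|$, so that $A$ cancels entirely; for the mMOSUM $\Psi_2$ I would choose $k_m$ so that the window $[m+\lfloor k_m b\rfloor+1,\,m+k_m]$ lies entirely after the change, writing $\Psi_2(m,k_m)$ directly as a two-sample sum of length $\asymp k_m(1-b)$ and reusing the $B$-type expansion.

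For the noise I would bound the three pieces of $B$, namely $\frac{k-k^*}{m}\sum_{i=1}^m h^*_{1,m}(Y_i)$, $\sum_{j=m+k^*+1}^{m+k}h^*_{2,m}(Z_{j,m})$ and $\frac{1}{m}\sum_{i,j}r^*_m(Y_i,Z_{j,m})$, using Assumption~\ref{regassH1} (ii), (iii) and (i) respectively, giving orders $O_P(\sqrt m)$, $O_P(\sqrt{k-k^*})$ and $O_P(\sqrt{m(k-k^*)\max(m,k-k^*)})$. The null-type term $A$ is treated analogously with the Hoeffding pieces of Assumption~\ref{regass}: the FCLT (ii) controls $\sum_{i=1}^m h_1(Y_i)=O_P(\sqrt m)$, the H\'ajek--R\'enyi inequalities (iii)--(iv) control the $h_2$-partial sum (invoking (iv) in the regime $k^*>m$), and (i) controls the remainder with the rate $u(m)/m^{2-2\gamma}\to 0$.

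I would then distinguish the two regimes. If $k^*/m\to\infty$, Assumption~\ref{altrho}(i) gives $\liminf_{t\to\infty}t\rho(t)>0$, so $\rho(k_m/m)\gtrsim m/k_m$; choosing $k_m=2k^*$ for $\Psi_1,\Psi_3$ and $k_m=2k^*/b$ for $\Psi_2$, the weighted signal is $\gtrsim\tfrac12\sqrt m\,|\Delta_m|\to\infty$ by \eqref{changedecay}, while every weighted noise piece above collapses to $O_P(1)$ or $o_P(1)$ after inserting these orders and the weight $m^{-1/2}\rho(k_m/m)$. If $k^*/m=O(1)$, continuity of $\rho$ together with Assumption~\ref{altrho}(ii) (choosing $k_m=\lfloor t_0 m\rfloor$ with $t_0>\nu$ for $\Psi_1,\Psi_3$) and Assumption~\ref{altrho}(iii) (choosing $k_m=\lfloor(t_0/b)m\rfloor$ for $\Psi_2$) ensures $\rho(k_m/m)\to\rho(t_0)>0$, respectively $\rho(t_0/b)>0$, and $k_m-k^*$ (respectively the window length) is of order $m$; hence the weighted signal is again of order $\sqrt m\,|\Delta_m|\to\infty$ and the noise is $O_P(1)$. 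In each case $\sup_{k\ge1}w(m,k)|\Psi_j(m,k)|\ge w(m,k_m)|\Psi_j(m,k_m)|\xrightarrow{P}\infty$ follows.

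The main obstacle is the order-bookkeeping that makes all weighted noise terms negligible simultaneously across both regimes, in particular matching the two-sample remainder rate of Assumption~\ref{regassH1}(i) against the weight $\rho(k_m/m)$ at the chosen time scale, and controlling the $h^*_{2,m}$-partial sums over the \emph{shifted} window $[m+\lfloor k_m b\rfloor+1,\,m+k_m]$ for the mMOSUM rather than the window starting at $m+k^*+1$; this requires invoking stationarity of $\{Z_{i,m}\}$ to reduce to Assumption~\ref{regassH1}(iii). By contrast, once Assumption~\ref{altrho} supplies a positive lower bound on the weight at the relevant scale, the divergence of the signal itself is immediate.
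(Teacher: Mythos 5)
Your proposal is correct and follows essentially the same route as the paper's proof: lower-bound the supremum at a single deterministic time ($2k^*$-type choices for late changes, $\lfloor t_0 m\rfloor$-type choices under Assumption~\ref{altrho} (ii)/(iii) for early ones, with the mMOSUM window pushed entirely past the change), decompose via \eqref{hoeffh1}, bound the noise terms by Assumption~\ref{regassH1}, and let the weighted signal diverge since $\sqrt{m}|\Delta_m|\to\infty$. The only implementation difference is the pre-change term: you re-expand it by Hoeffding's decomposition and bound the pieces with Assumption~\ref{regass} (and cancel it outright for $\Psi_3$ via $l=k^*$), whereas the paper disposes of it in one stroke by bounding $w(m,\tilde{k})\,|\Psi_j(m,k^*)|=O_P(1)$ through an application of Theorem~\ref{as.H0} with the weight \eqref{exw} for $\gamma=0$; both are valid and the bookkeeping you outline checks out in both regimes.
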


\subsection{Regularity assumptions}\label{sec.exass}
In this section, we show that 
Assumption~\ref{regass} is fulfilled in the i.i.d.\ case under certain moment assumptions as well as under the weak dependency concept that has also been used in \cite{Dehl}.

We first properly define the latter:
A stochastic process $\{Z_i:i\in\mathbb{Z}\}$ is called absolutely regular (see \cite[Definition 1.1]{borovkova2001limit}) if
\begin{align*}
\beta(k)&=\sup_{i\geq 1}\left\{\E\left(\sup_{A\in\mathcal{A}_{i+k}^{\infty}}|P(A|\mathcal{A}_{-\infty}^i)-P(A)|\right)\right\}\notag\\
&=\frac 12\sup_{i\geq 1}\left\{\sup\sum_{j=1}^J\sum_{l=1}^L|P(A_j\cap B_l)-P(A_j)P(B_l)| \right\}\rightarrow 0\quad (k\rightarrow\infty)
\end{align*}
where $\mathcal{A}_{i_1}^{i_2}=\sigma(Z_{i_1},Z_{i_1+1}\ldots, Z_{i_2})$ and the inner supremum in the second representation is taken over all finite $\mathcal{A}_{-\infty}^i-$ measurable partitions $(A_1,\ldots,A_J)$ and all finite $\mathcal{A}_{i+k}^{\infty}-$ measurable partitions $(B_1,\ldots,B_L)$, $J,L$ arbitrary. A sequence $\{Y_i:i\geq 1\}$ is called an r-approximating functional (see \cite[Definition 1.4.]{borovkova2001limit}) with approximating constants $\{a_k\}_{k\geq 0}$  of $\{Z_i:i\in\mathbb{Z}\}$ if
$$\E|Y_0-\E(Y_0|Z_{-k},\ldots,Z_k)|^r\leq a_k$$
with $a_k\rightarrow 0$ as $k\rightarrow\infty$. A kernel $h:\mathbb{R}^2\rightarrow\mathbb{R}$ is called 1-continuous, if there exists a function $\Phi:(0,\infty)\rightarrow (0,\infty)$ with $\Phi(\epsilon)\rightarrow 0$ as $\epsilon\rightarrow 0$ such that for all $\epsilon>0$
\begin{align*}
&\E\left(|h(X',Y)-h(X,Y)|1_{\{|X-X'|<\epsilon\}}\right)\leq \Phi(\epsilon)\notag\\
&\E\left(|h(X,Y')-h(X,Y)|1_{\{|Y-Y'|<\epsilon\}}\right)\leq \Phi(\epsilon)
\end{align*}
for all random variables $X,X',Y$ and $Y'$ having the same marginal distribution as $Y_1,$ and such that $X,Y$ are either independent or have joint distribution $P_{(Y_1,Y_k)}$ for some integer k (see Definition 2 in \cite{Dehl}).

\begin{lemma}\label{ass.ex}
\begin{itemize}
\item[a)] Let $Y_1,Y_2,\ldots$ be i.i.d.\ random variables 
	and let $h:\mathbb{R}^2\rightarrow\mathbb{R}$ be a kernel with
\begin{align}\label{ass.sym}
Eh^2(Y_1,Y_2)<\infty,\quad
0<\E\left|h_1(Y_1)\right|^{\nu}<\infty,\quad
0<\E\left|h_2(Y_1)\right|^{\nu}<\infty
\end{align}
for some $\nu>2$. Then, Assumptions \ref{regass} are fulfilled.
\item[b)] Let $\{Y_i:i\geq 1\}$ be a 1-approximating functional with approximating constants $\{a_k\}_{k\geq 0}$ of an absolutely regular process with mixing coefficients $\{\beta(k)\}_{k\geq 0}$ and let $h(x,y)$ be a bounded 1-continuous kernel. Then, Assumptions \ref{regass} are fulfilled if
\begin{align}\label{coeff.cond}
\sum_{k\geq 1}k^2\left(\beta(k)+\sqrt{a_k}+\Phi(\sqrt{a_k})\right)<\infty.
\end{align}
\end{itemize}
\end{lemma}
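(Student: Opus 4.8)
The plan is to verify the four conditions of Assumption~\ref{regass} separately in each case, exploiting the fact that by Hoeffding's decomposition \eqref{Hdec} the remainder $r$ is \emph{completely degenerate}, i.e.\ $\E(r(y,Y_2))=\E(r(Y_1,y))=0$ for every $y$. I would record this first, since it is what drives condition~(i).

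For part (a), in the i.i.d.\ case condition~(i) is essentially orthogonality. Expanding $\E(|\sum_{i=1}^m\sum_{j=k_1}^{k_2}r(Y_i,Y_j)|^2)$ as a sum of terms $\E(r(Y_i,Y_j)r(Y_{i'},Y_{j'}))$, I would note that the historic indices $i,i'\le m$ and the monitoring indices $j,j'\ge k_1>m$ are automatically disjoint, so that every cross term in which the pairs $\{i,j\}$ and $\{i',j'\}$ are not identical vanishes: when the pairs are disjoint, independence together with $\E r=0$ kills the term, and when exactly one index is shared, conditioning on the shared variable and using complete degeneracy kills it. Only the $m(k_2-k_1+1)$ diagonal terms survive, giving the bound with $u(m)=m\,\E(r(Y_1,Y_2)^2)$, which is finite by $\E h^2<\infty$; since $u(m)m^{-(2-2\gamma)}\log(m)^2\asymp m^{2\gamma-1}\log(m)^2\to0$ for $\gamma<\tfrac12$, condition~(i) holds. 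Condition~(ii) is the multivariate Donsker theorem applied to the i.i.d.\ centered vectors $(h_1(Y_i),h_2(Y_i))$, whose finite second moments follow from $\E h^2<\infty$; the covariance matrix reduces to $\Sigma=\Cov((h_1(Y_1),h_2(Y_1)))$, matching \eqref{defsig} as only the lag-zero term survives.

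Conditions~(iii) and~(iv) are maximal inequalities for the partial sums $S_k=\sum_{j=1}^k h_2(Y_j)$ of i.i.d.\ centered variables, which I would obtain from the classical H\'ajek--R\'enyi inequality. With the nondecreasing weights $b_k=m^{1/2-\alpha}k^{\alpha}$ one has $\sum_{k=1}^m \Var(h_2(Y_1))/b_k^2\asymp (1-2\alpha)^{-1}\Var(h_2(Y_1))$ for $\alpha<\tfrac12$, giving a probability bound of order $\lambda^{-2}$ and hence the $O_P(1)$ statement in~(iii). For~(iv) I would apply the tail version of the same inequality with $b_k=k$, obtaining $\P(\sup_{k\ge k_m}|S_k|/k\ge\lambda)\le C\lambda^{-2}/k_m$; choosing $\lambda\propto 1/\sqrt{k_m}$ yields the claimed rate $O_P(1/\sqrt{k_m})$. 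Finite second moments suffice here, while the assumed $\nu>2$ moments and the positivity $0<\E|h_j(Y_1)|^\nu$ serve to guarantee nondegenerate limits and to invoke the general maximal inequalities in uniform form.

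For part (b), all four conditions are inherited from the invariance-principle machinery for 1-approximating functionals of absolutely regular processes developed in \cite{borovkova2001limit} and \cite{Dehl}. Condition~(ii) follows because $h_1(y)=\E h(y,Y_2)-\theta$ and $h_2(y)=\E h(Y_1,y)-\theta$ inherit a 1-continuity-type regularity, so $\{h_1(Y_i)\}$ and $\{h_2(Y_i)\}$ are themselves approximating functionals and the bivariate invariance principle for such sequences applies, the long-run variances \eqref{defsig} being summable under \eqref{coeff.cond}; conditions~(iii) and~(iv) are the H\'ajek--R\'enyi inequalities for weakly dependent sequences, which hold for approximating functionals under \eqref{coeff.cond} and which I would cite from Section~B.1 of \cite{kdiss}. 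Condition~(i) is the substantive point: here $r(Y_i,Y_j)$ is no longer orthogonal across indices, so I would replace the orthogonality argument by the covariance inequalities for degenerate (canonical) kernels of approximating functionals from \cite{borovkova2001limit}, bounding $|\Cov(r(Y_i,Y_j),r(Y_{i'},Y_{j'}))|$ by terms of the form $\beta(\cdot)+\sqrt{a_{\cdot}}+\Phi(\sqrt{a_{\cdot}})$ indexed by the gaps between $\{i,j\}$ and $\{i',j'\}$, and then summing over the double index range; the boundedness of $h$ together with the $k^2$-weighted summability \eqref{coeff.cond}, which is exactly what absorbs the number of index configurations at a given gap, again yields a variance bound of order $m(k_2-k_1+1)$, i.e.\ $u(m)=Cm$.

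The main obstacle is precisely this dependent-case verification of condition~(i): organising the covariance sum over the four running indices $i,i',j,j'$ so that the gap-indexed mixing and approximation bounds can be applied, and checking that the resulting bound is genuinely linear in both $m$ and $k_2-k_1+1$ rather than carrying an extra factor. This is where the boundedness of $h$, the 1-continuity controlling the effect of the $r$-approximation, and the $k^2$-weighted summability \eqref{coeff.cond} must be combined carefully; by contrast the functional central limit theorem and the H\'ajek--R\'enyi inequalities can be taken essentially off the shelf from the cited references.
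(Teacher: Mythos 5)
Your proposal is correct and follows essentially the same route as the paper: for part (a) the orthogonality argument exploiting the complete degeneracy of $r$ (with $u(m)=m\,\E r^2(Y_1,Y_2)$), the bivariate Donsker theorem for (ii), and the classical H\'ajek--R\'enyi inequality for (iii)--(iv); for part (b) the covariance bounds for degenerate kernels of 1-approximating functionals (the paper cites Lemma~1 of \cite{Dehl} for exactly the bound $Cm(k_2-k_1+1)$ you describe), the inherited 1-continuity of $h_1,h_2$ for the FCLT, and maximal inequalities for approximating functionals from \cite{borovkova2001limit} and \cite{kdiss}. The only cosmetic difference is that the paper verifies (b)(iii)--(iv) via fourth-moment maximal inequalities (Theorem~B.3 in \cite{kdiss}) after showing $\{h_2(Y_i)\}$ is itself a 1-approximating functional, which is the same machinery you invoke.
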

The  Wilcoxon kernel is bounded and 1-continuous so that Lemma~\ref{ass.ex} b) applies.

The DOM kernel, on the other hand,  is clearly not bounded such that Lemma \ref{ass.ex} b) does not apply. However, for the DOM kernel, Assumptions~\ref{regass} can be shown for many weak dependency concepts: Note, in particular, that due to remainder term being exactly zero, Assumption~\ref{regass} (i) holds trivially for the DOM kernel. Because $h_1^D(Y_1)=Y_1-\E(Y_1)=-h_2^D(Y_1)$ (ii) comes down to a functional central limit theorem for $\{Y_i\}$, which has been derived under various assumptions on the dependency.
Similarly, the H\'ajek-R\'enyi-type inequalities in (iii) and (iv) (for $Y_i-\E(Y_i)$) have been shown under rather general assumptions. For example, as described below Assumption A.3 in \cite{Est}, all three assumptions follow from a strong invariance principle that has been derived for many processes (see e.g.\ Sections 2.4 in \cite{Auediss} for some examples).


\section{Simulation study and data analysis}
\subsection{Simulation study}\label{sec.sim}

In this section, we illustrate the finite sample performance of the sequential procedures comparing different monitoring schemes, different choices of $\gamma$ in \eqref{exw} as well as the DOM- and Wilcoxon kernel via their empirical size, power and stopping time. All empirical results are based on $10\, 000$ replications for each scenario. 

We consider the mean change model as in (\ref{meanex}) with $d_m=0.5$, $m=100$ and independent innovations $\{Y_i\}$ with a standard normal distribution as well as a standardized $t_3$ distribution, where under the alternative we consider early $k^{*}=m^{\beta}$ with $\beta=0.5$, medium late $\beta=1$ as well as late changes $\beta=1.4$. The asymptotic critical values are obtained based on $50\,000$ realizations of the limit distributions in Corollary \ref{simple1} b) where the Wiener processes are approximated on a grid of $10\, 000$ points.

For the Wilcoxon kernel, the variance as in \eqref{eq_var_wil} does not depend on the distribution of $Y_1$, while for the DOM statistics $\Var(Y_1)$ (see \eqref{eq_var_dom}) is estimated by the empirical variance of the historic data set $\widehat{\sigma}_m^2=\frac{1}{m-1}\sum_{j=1}^m\left(X_{j,m}-\frac{1}{m}\sum_{l=1}^mX_{l,m}\right)^2$.

Following \cite{kirch2018modified} we start monitoring only after $a_m=\sqrt{m}=10$ in all settings in order to avoid false positives at the very beginning of the monitoring period. This does not affect the null asymptotics.

\begin{table}[tb]
\begin{minipage}{0.48\textwidth}
\textbf{Standard normal distribution:}
\vspace{-0.3cm}
\begin{center}
\begin{adjustbox}{max width=\textwidth}
\(
\begin{tabu}{l||c|c|c||c|c|c|}
\multicolumn{1}{c||}{\textbf{Monitoring}}&\multicolumn{3}{c||}{\mbox{\textbf{DOM kernel}}}&\multicolumn{3}{c|}{\mbox{\textbf{Wilcoxon kernel}}}\\
\multicolumn{1}{c||}{\cellcolor{white}\textbf{scheme}}&\boldsymbol{\gamma=0\,\,\,\,}&\boldsymbol{\gamma=0.25}&\boldsymbol{\gamma=0.45}&\boldsymbol{\gamma=0\,\,\,\,}&\boldsymbol{\gamma=0.25}&\boldsymbol{\gamma=0.45}\\\hline
\mbox{\textbf{CUSUM}}&4.70&4.72&3.69&4.26&4.40&3.13\\\hline
\mbox{\textbf{Page-CUSUM}}&4.55&4.55&3.22&4.25&4.18&2.52\\\hline
\mbox{\textbf{mMOSUM}}&&&&&&\\
\multicolumn{1}{r||}{b=0.1}&4.62&4.83&3.61&4.35&4.48&2.91\\\hline
\multicolumn{1}{r||}{b=0.4}&4.95&5.08&3.07&4.84&4.31&2.25\\\hline
\multicolumn{1}{r||}{b=0.9}&4.90&5.28&3.94&2.09&0.86&0.03\\\hline
\end{tabu}
\)
\end{adjustbox}\\
\end{center}
\end{minipage}
\hfill
\begin{minipage}{0.48\textwidth}
\textbf{Standardized $t_3$-distribution:}
\vspace{-0.3cm}
\begin{center}
\begin{adjustbox}{max width=\textwidth}
\(
\begin{tabu}{l||c|c|c||c|c|c|}
\multicolumn{1}{c||}{\textbf{Monitoring}}&\multicolumn{3}{c||}{\mbox{\textbf{DOM kernel}}}&\multicolumn{3}{c|}{\mbox{\textbf{Wilcoxon kernel}}}\\
\multicolumn{1}{c||}{\cellcolor{white}\textbf{scheme}}&\boldsymbol{\gamma=0\,\,\,\,}&\boldsymbol{\gamma=0.25}&\boldsymbol{\gamma=0.45}&\boldsymbol{\gamma=0\,\,\,\,}&\boldsymbol{\gamma=0.25}&\boldsymbol{\gamma=0.45}\\\hline
\mbox{\textbf{CUSUM}}& 5.56& 6.87& 6.93& 4.39& 4.36& 3.12\\\hline
\mbox{\textbf{Page-CUSUM}}& 5.79& 7.12& 6.50& 4.27& 4.09& 2.37\\\hline
\mbox{\textbf{mMOSUM}}&&&&&&\\
  \multicolumn{1}{r||}{b=0.1}&6.24& 7.71& 6.86& 4.51& 4.34& 2.78\\\hline
 \multicolumn{1}{r||}{b=0.4}& 8.64&10.02& 8.41& 4.46& 3.80& 1.94\\\hline
\multicolumn{1}{r||}{b=0.9}&29.26&31.53&24.86& 2.30& 0.98& 0.03\\\hline
\end{tabu}
\)
\end{adjustbox}
\end{center}
\end{minipage}
\caption{Empirical size (in $\%$) for a nominal level of $\alpha=5\%$ for independent innovations.}
\label{empsize}

\end{table}

Table \ref{empsize} shows the empirical size. For the standard normal distribution the procedures hold the nominal level and become more and more conservative with increasing $\gamma$. This is still true for the $t_3$-distribution when using the Wilcoxon kernel whereas the DOM kernel in combination with the mMOSUM and $b=0.9$ leads to high rejection rates, which occur mainly at the beginning of the monitoring period where only few summands are included in the monitoring statistics.
While for the normal distribution, this effect can be controlled by an appropriate choice of $a_m$, for the $t_3$-distribution and large values of $b$ this is not sufficient. Hence, one should be careful when using large $b$ for heavy tailed observations.

\begin{table}[htb]
\textbf{DOM kernel:}
\vspace{-0.3cm}
\begin{center}
\begin{adjustbox}{max width=0.9\textwidth}
\(
\begin{tabu}{l||c|c|c||c|c|c||c|c|c|}
&\multicolumn{3}{c||}{\boldsymbol{\beta=0.25,\quad k^{*}=3}}&\multicolumn{3}{c||}{\boldsymbol{\beta=1,\quad k^{*}=100}}&\multicolumn{3}{c|}{\boldsymbol{\beta=1.4,\quad k^{*}=630}}\\
\multicolumn{1}{c||}{\cellcolor{white}\textbf{Monitoring scheme}}&\boldsymbol{\gamma=0}&\boldsymbol{\gamma=0.25}&\boldsymbol{\gamma=0.45}&\boldsymbol{\gamma=0}&\boldsymbol{\gamma=0.25}&\boldsymbol{\gamma=0.45}&\boldsymbol{\gamma=0}&\boldsymbol{\gamma=0.25}&\boldsymbol{\gamma=0.45}\\\hline
\mbox{\textbf{CUSUM}}&99.75&99.69&99.18&99.27&99.02&98.08&87.74&84.78&76.85\\\hline
\mbox{\textbf{Page-CUSUM}}&99.74&99.68&99.21&99.45&99.34&98.52&94.99&92.65&84.28\\\hline
\mbox{\textbf{mMOSUM}}&&&&&&&&&\\
\multicolumn{1}{r||}{b=0.1}&99.75&99.60&99.05&99.61&99.43&98.66&93.20&90.52&82.11\\\hline
\multicolumn{1}{r||}{b=0.4}&99.59&99.25&97.54&99.53&99.07&96.84&99.41&98.39&93.61\\\hline
\multicolumn{1}{r||}{b=0.9}&89.42&71.36&36.51&88.43&64.74&19.28&72.85&33.80& 6.00\\\hline
\end{tabu}
\)
\end{adjustbox}
\end{center}
$ $\\
\textbf{Wilcoxon kernel:}
\vspace{-0.3cm}
\begin{center}
\begin{adjustbox}{max width=0.9\textwidth}
\(
\begin{tabu}{l||c|c|c||c|c|c||c|c|c|}
&\multicolumn{3}{c||}{\boldsymbol{\beta=0.25,\quad k^{*}=3}}&\multicolumn{3}{c||}{\boldsymbol{\beta=1,\quad k^{*}=100}}&\multicolumn{3}{c|}{\boldsymbol{\beta=1.4,\quad k^{*}=630}}\\
\multicolumn{1}{c||}{\cellcolor{white}\textbf{Monitoring scheme}}&\boldsymbol{\gamma=0}&\boldsymbol{\gamma=0.25}&\boldsymbol{\gamma=0.45}&\boldsymbol{\gamma=0}&\boldsymbol{\gamma=0.25}&\boldsymbol{\gamma=0.45}&\boldsymbol{\gamma=0}&\boldsymbol{\gamma=0.25}&\boldsymbol{\gamma=0.45}\\\hline
\mbox{\textbf{CUSUM}}&99.68&99.59&99.00&99.10&98.84&97.79&86.52&83.59&74.78\\\hline
\mbox{\textbf{Page-CUSUM}}&99.66&99.55&99.02&99.39&99.13&98.20&94.68&91.65&82.25\\\hline
\mbox{\textbf{mMOSUM}}&&&&&&&&&\\
\multicolumn{1}{r||}{b=0.1}&99.64&99.52&98.86&99.54&99.33&98.48&92.40&89.30&80.67\\\hline
\multicolumn{1}{r||}{b=0.4}&99.45&99.10&97.62&99.39&98.92&96.80&99.21&98.16&93.47\\\hline
\multicolumn{1}{r||}{b=0.9}&91.58&80.55&52.72&91.00&76.98&38.45&76.50&44.73& 9.57\\\hline
\end{tabu}
\)
\end{adjustbox}
\end{center}
\caption{Size corrected power (in $\%$) for independent $N(0,1)$-distributed innovations.}
\label{emppower}
\vspace{0.5cm}

%
\textbf{DOM kernel:}
\vspace{-0.3cm}
\begin{center}
\begin{adjustbox}{max width=0.9\textwidth}
\(
\begin{tabu}{l||c|c|c||c|c|c||c|c|c|}
\multicolumn{1}{c||}{\mbox{\textbf{Monitoring}}}&\multicolumn{3}{c||}{\boldsymbol{\beta=0.25,\quad k^{*}=3}}&\multicolumn{3}{c||}{\boldsymbol{\beta=1,\quad k^{*}=100}}&\multicolumn{3}{c|}{\boldsymbol{\beta=1.4,\quad k^{*}=630}}\\
\multicolumn{1}{c||}{\mbox{\textbf{scheme}}}&\boldsymbol{\gamma=0}&\boldsymbol{\gamma=0.25}&\boldsymbol{\gamma=0.45}&\boldsymbol{\gamma=0}&\boldsymbol{\gamma=0.25}&\boldsymbol{\gamma=0.45}&\boldsymbol{\gamma=0}&\boldsymbol{\gamma=0.25}&\boldsymbol{\gamma=0.45}\\\hline
\mbox{\textbf{CUSUM}}&98.48&97.96&96.66&97.87&97.08&94.78&88.33&84.07&74.53\\\hline
\mbox{\textbf{Page-CUSUM}}&98.39&97.73&96.17&98.01&97.12&94.85&92.82&88.49&78.21\\\hline
\mbox{\textbf{mMOSUM}}&&&&&&&&&\\
\multicolumn{1}{r||}{b=0.1}&98.26&97.64&95.94&98.22&97.43&95.33&91.62&87.66&78.42\\\hline
\multicolumn{1}{r||}{b=0.4}&97.17&95.48&89.94&97.12&95.30&89.02&96.43&93.80&84.27\\\hline
  \multicolumn{1}{r||}{b=0.9}&26.30&16.77&12.13&25.03&13.99& 9.03&14.42& 7.78& 7.45\\\hline
\end{tabu}
\)
\end{adjustbox}
\end{center}
$ $\\
\textbf{Wilcoxon kernel:}
\vspace{-0.3cm}
\begin{center}
\begin{adjustbox}{max width=0.9\textwidth}
\(
\begin{tabu}{l||c|c|c||c|c|c||c|c|c|}
\multicolumn{1}{c||}{\mbox{\textbf{Monitoring}}}&\multicolumn{3}{c||}{\boldsymbol{\beta=0.25,\quad k^{*}=3}}&\multicolumn{3}{c||}{\boldsymbol{\beta=1,\quad k^{*}=100}}&\multicolumn{3}{c|}{\boldsymbol{\beta=1.4,\quad k^{*}=630}}\\
\multicolumn{1}{c||}{\mbox{\textbf{scheme}}}&\boldsymbol{\gamma=0}&\boldsymbol{\gamma=0.25}&\boldsymbol{\gamma=0.45}&\boldsymbol{\gamma=0}&\boldsymbol{\gamma=0.25}&\boldsymbol{\gamma=0.45}&\boldsymbol{\gamma=0}&\boldsymbol{\gamma=0.25}&\boldsymbol{\gamma=0.45}\\\hline
\mbox{\textbf{CUSUM}}&100.00&100.00&100.00&100.00&100.00&100.00& 99.69& 99.37&97.93\\\hline
\mbox{\textbf{Page-CUSUM}}&100.00&100.00&100.00&100.00&100.00&100.00&100.00& 99.99&99.86\\\hline
\mbox{\textbf{mMOSUM}}&&&&&&&&&\\
\multicolumn{1}{r||}{b=0.1}&100.00&100.00&100.00&100.00&100.00&100.00& 99.93& 99.89&99.25\\\hline
\multicolumn{1}{r||}{b=0.4}&100.00&100.00&100.00&100.00&100.00& 99.99&100.00&100.00&99.98\\\hline
\multicolumn{1}{r||}{b=0.9}& 99.81& 98.82& 88.25& 99.77& 98.56& 83.37& 99.24& 92.86&50.94\\\hline
\end{tabu}
\)
\end{adjustbox}
\end{center}
\caption{Size corrected power (in $\%$) for independent $t_3$-distributed innovations.}
\label{emppower.t}
\end{table}

\begin{center}
\begin{figure}[tb]
\begin{tabular}{llll}
\hspace{-0.3cm}\vspace{-0.5cm}&\multicolumn{1}{c}{$\beta=0.25$}&\multicolumn{1}{c}{$\beta=0.75$}&\multicolumn{1}{c}{$\beta=1.4$}\\
\multicolumn{1}{l}{\raisebox{18ex}[0ex][0ex]{\small$\gamma=0$}}&\hspace{-0.3cm}\includegraphics[width=0.3\textwidth]{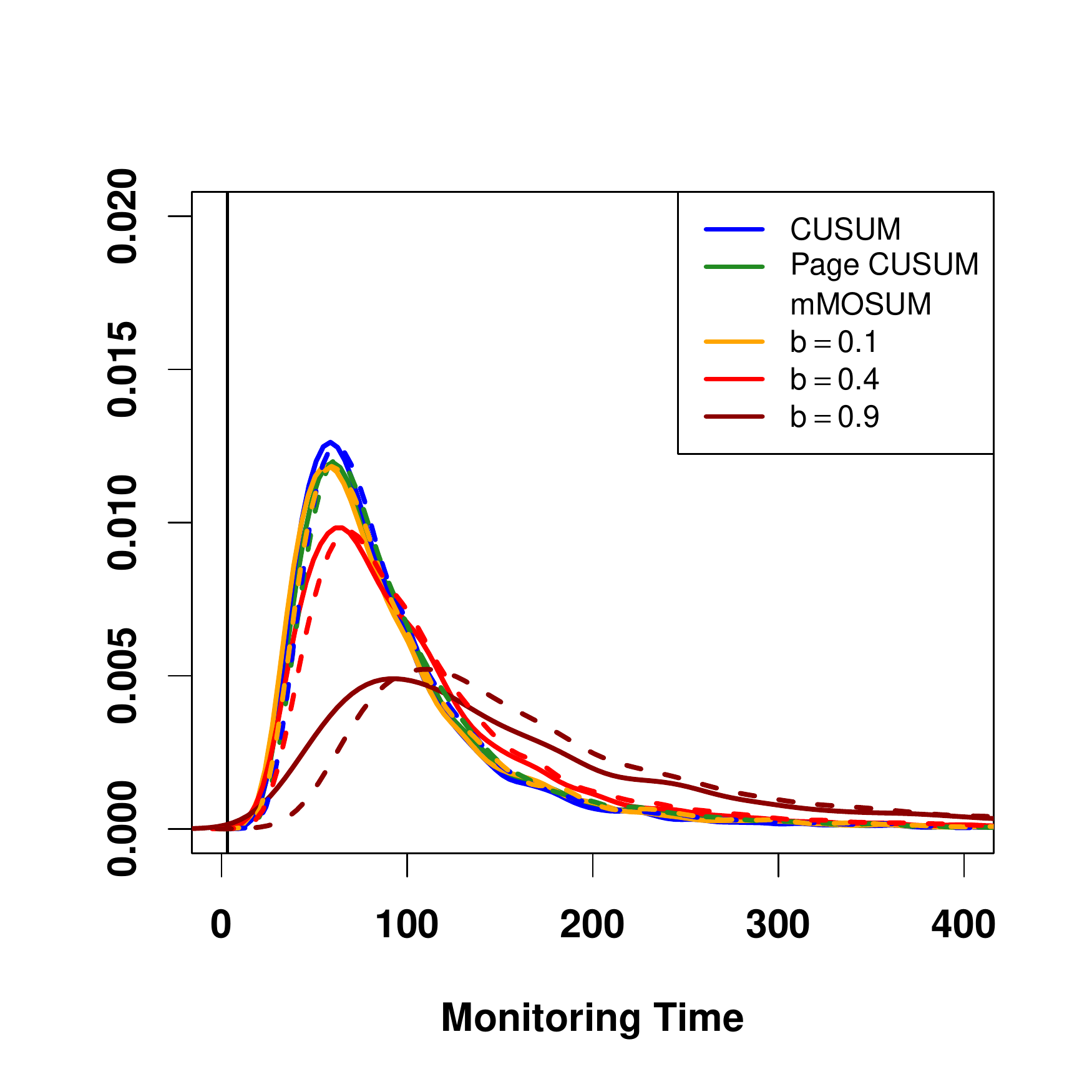}
&\hspace{-0.3cm}\includegraphics[width=0.3\textwidth]{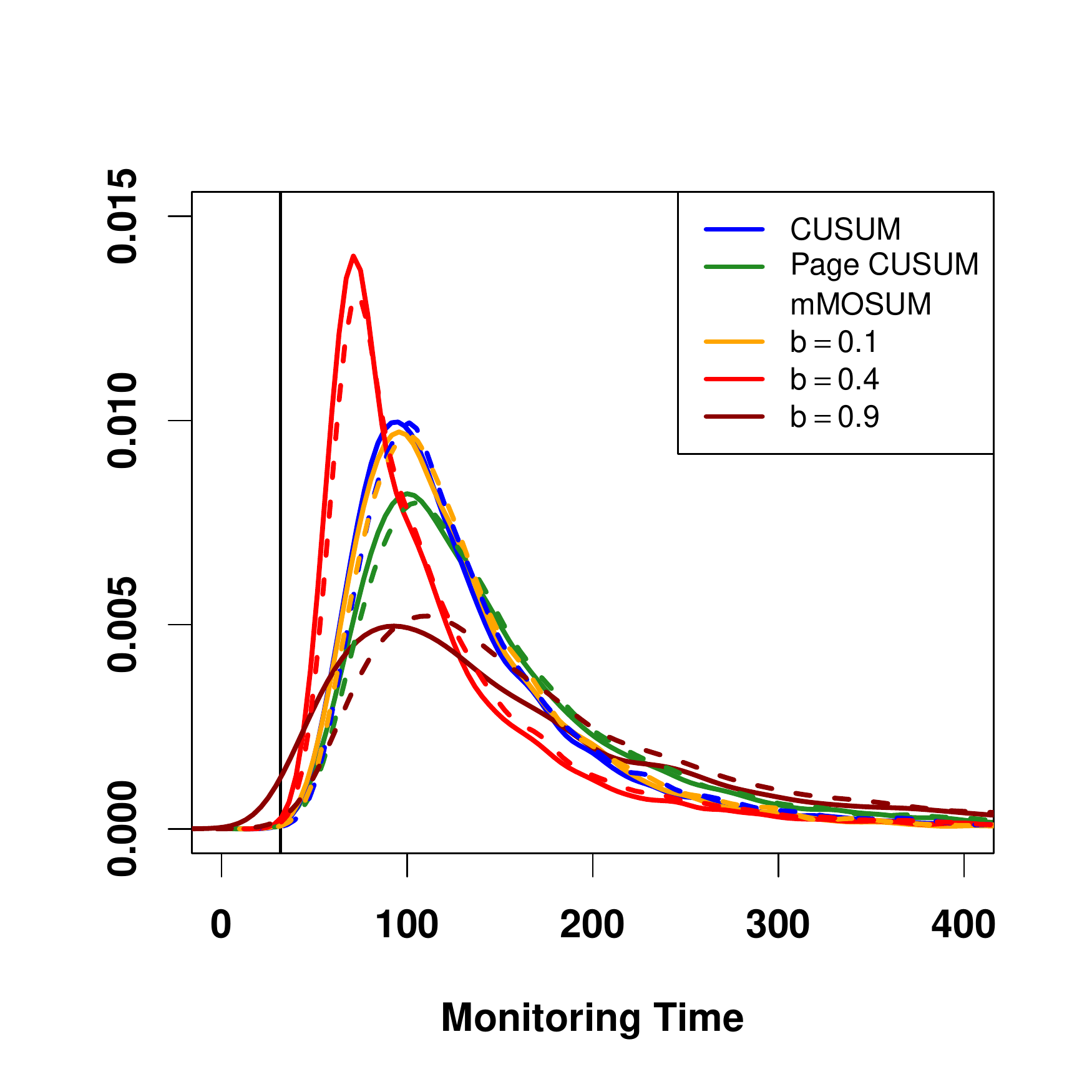}
&\hspace{-0.3cm}\includegraphics[width=0.3\textwidth]{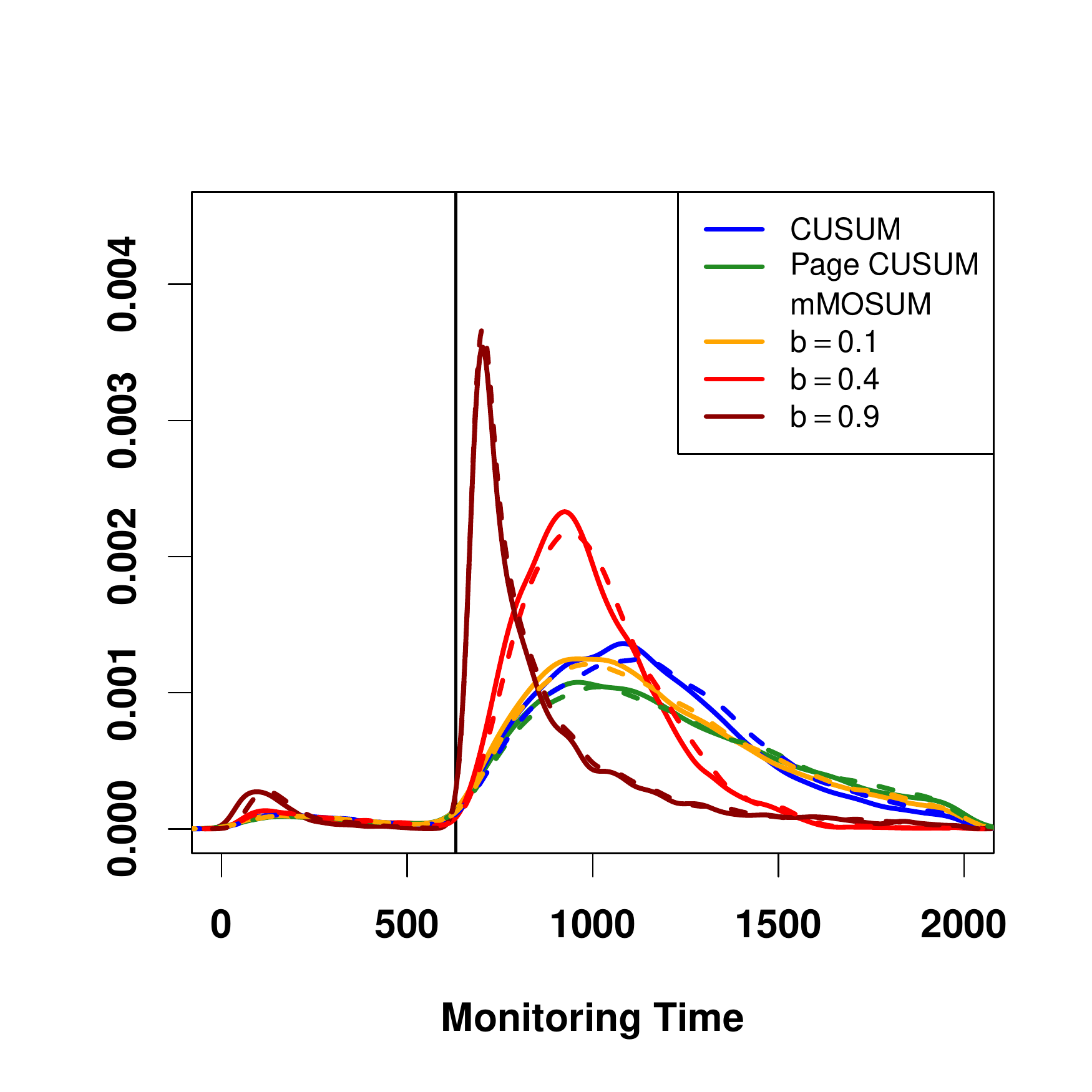}\\
\multicolumn{1}{l}{\raisebox{18ex}[0ex][0ex]{\small$\gamma=0.45$}}&\hspace{-0.3cm}\includegraphics[width=0.3\textwidth]{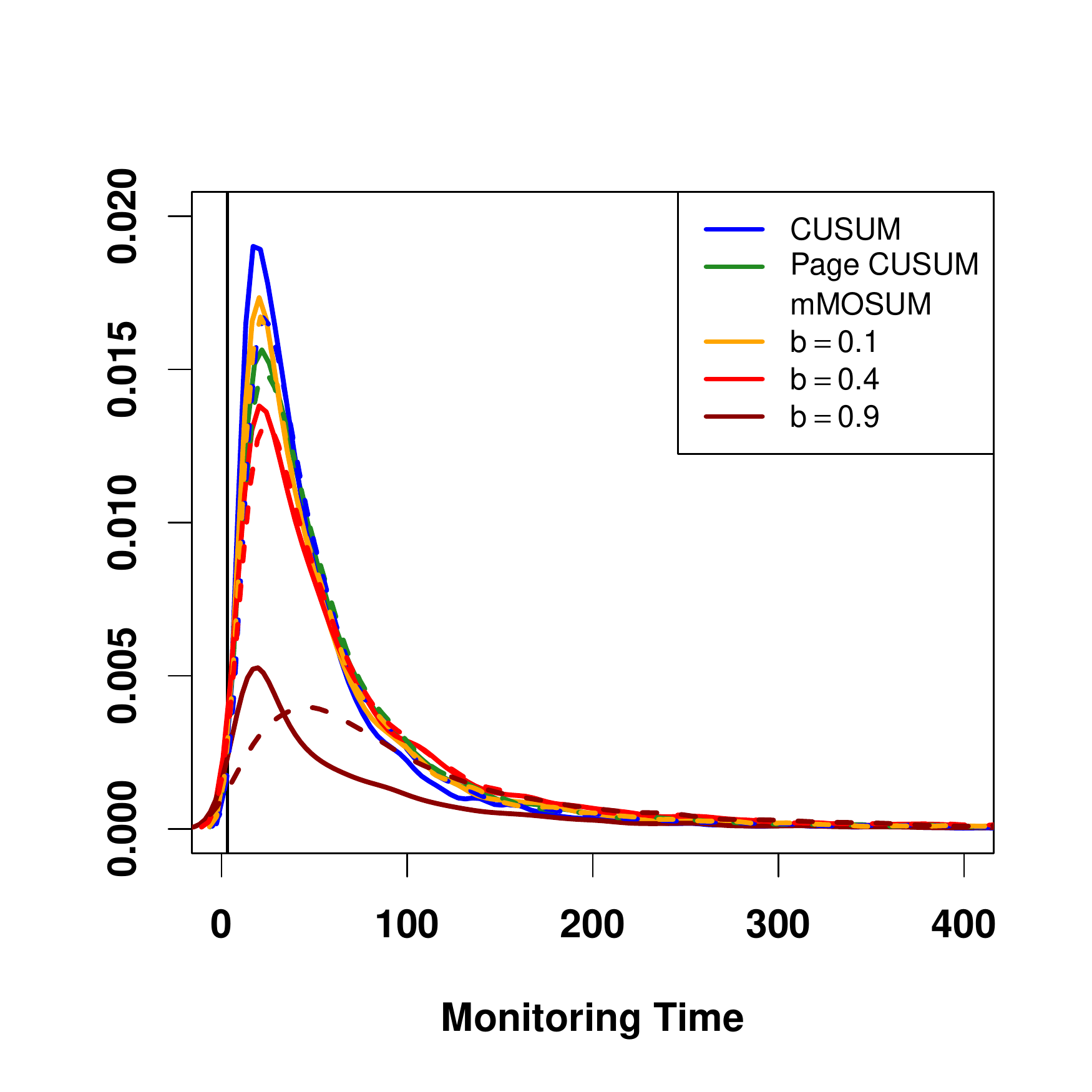}
&\hspace{-0.3cm}\includegraphics[width=0.3\textwidth]{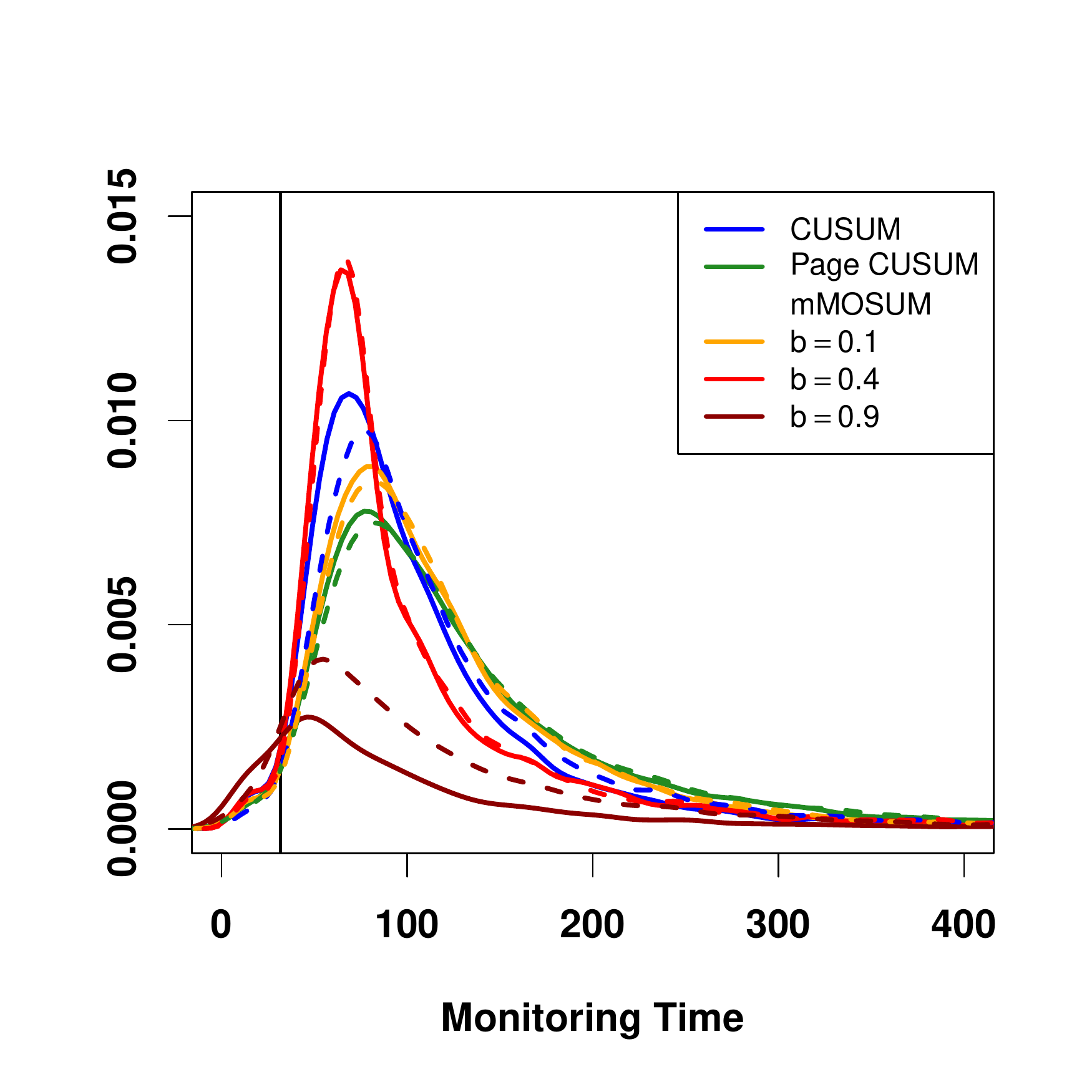}
&\hspace{-0.3cm}\includegraphics[width=0.3\textwidth]{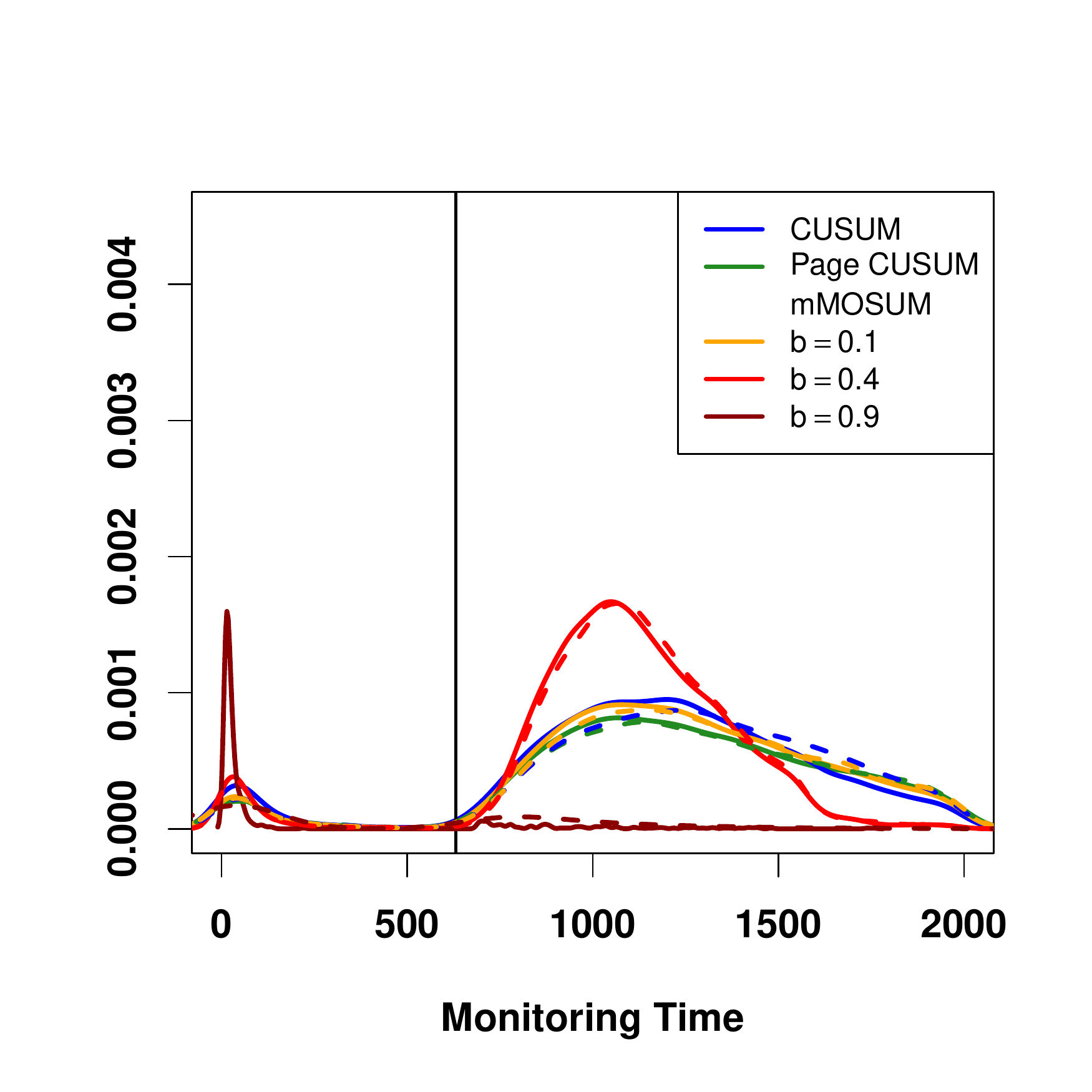}\\
\end{tabular}
	\caption{Estimated densities of the stopping time for the DOM kernel (solid lines) and the Wilcoxon kernel (dashed lines) for independent $N(0,1)$-distributed observations.}
	\label{run1}
\vspace{0.2cm}
\begin{tabular}{llll}
\hspace{-0.3cm}\vspace{-0.5cm}&\multicolumn{1}{c}{$\beta=0.25$}&\multicolumn{1}{c}{$\beta=0.75$}&\multicolumn{1}{c}{$\beta=1.4$}\\
\multicolumn{1}{l}{\raisebox{18ex}[0ex][0ex]{\small$\gamma=0$}}&\hspace{-0.3cm}\includegraphics[width=0.3\textwidth]{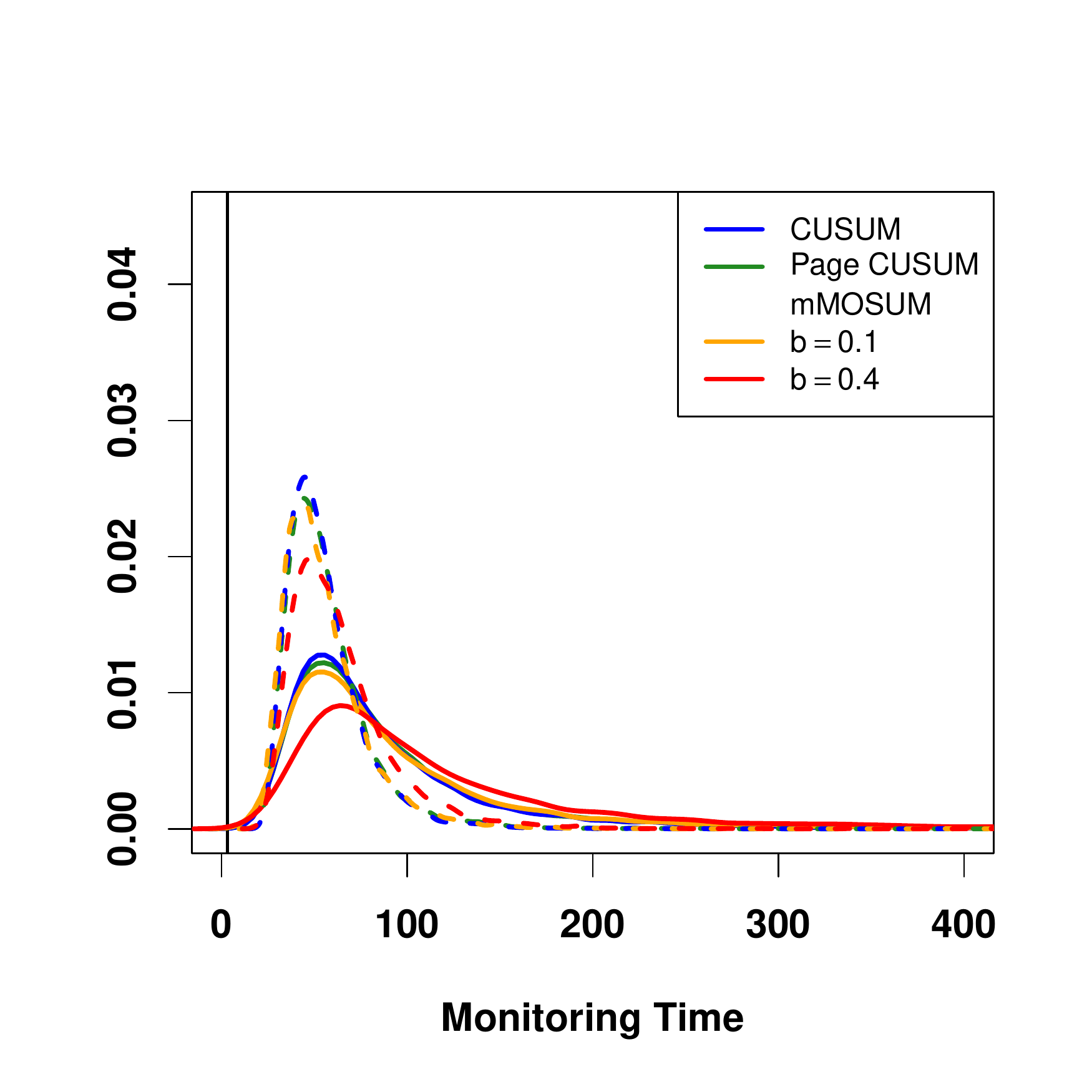}
&\hspace{-0.3cm}\includegraphics[width=0.3\textwidth]{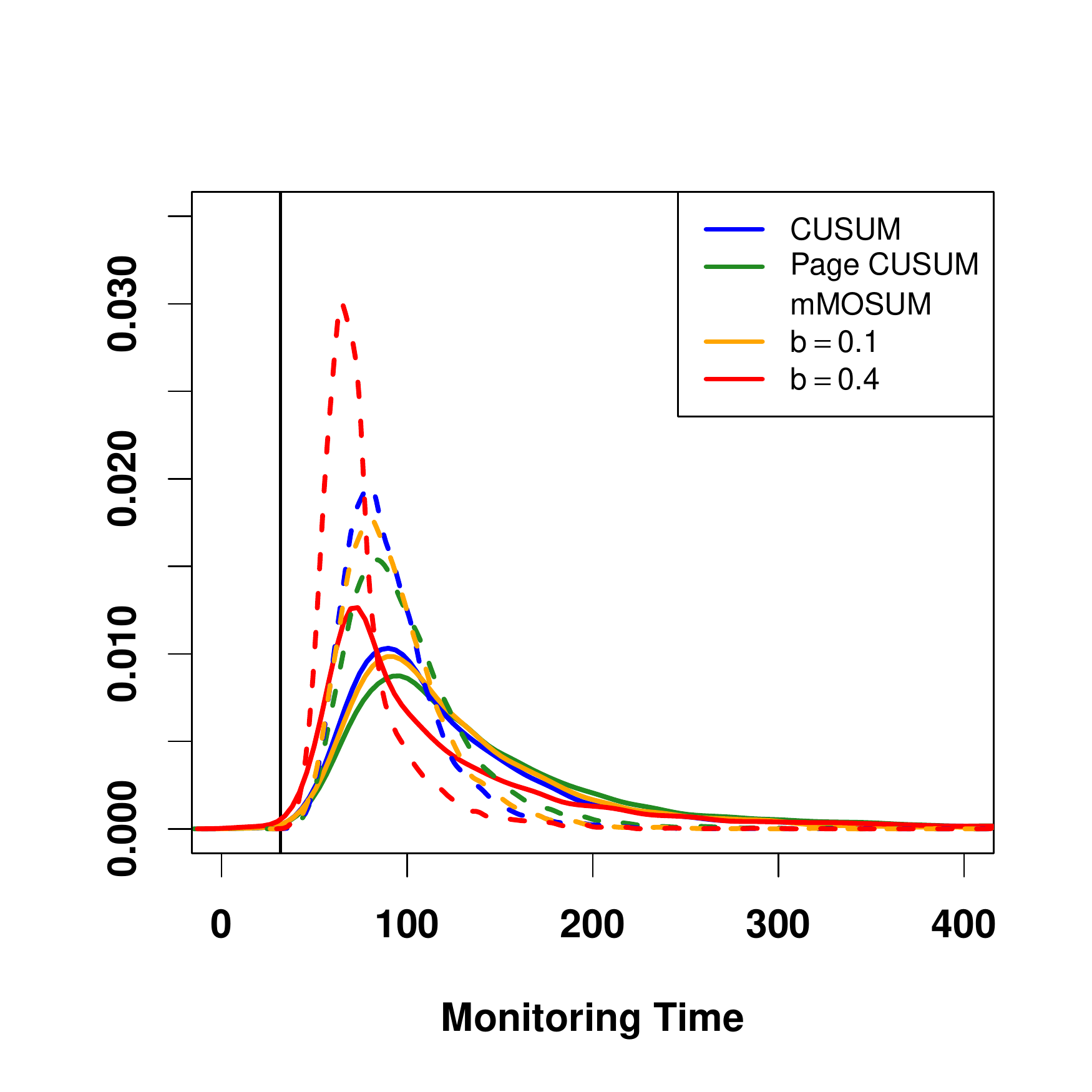}
&\hspace{-0.3cm}\includegraphics[width=0.3\textwidth]{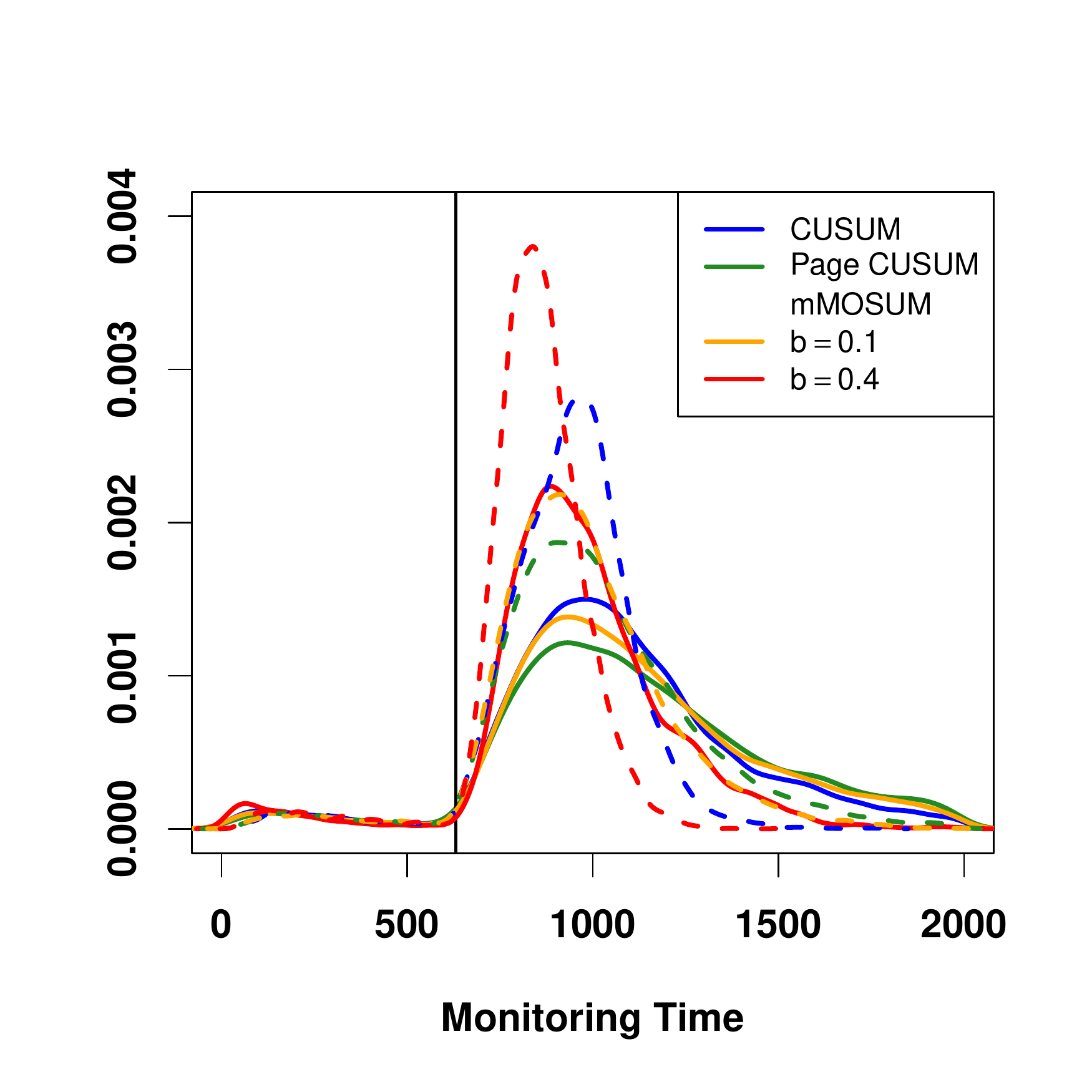}\\
\multicolumn{1}{l}{\raisebox{18ex}[0ex][0ex]{\small$\gamma=0.45$}}&\hspace{-0.3cm}\includegraphics[width=0.3\textwidth]{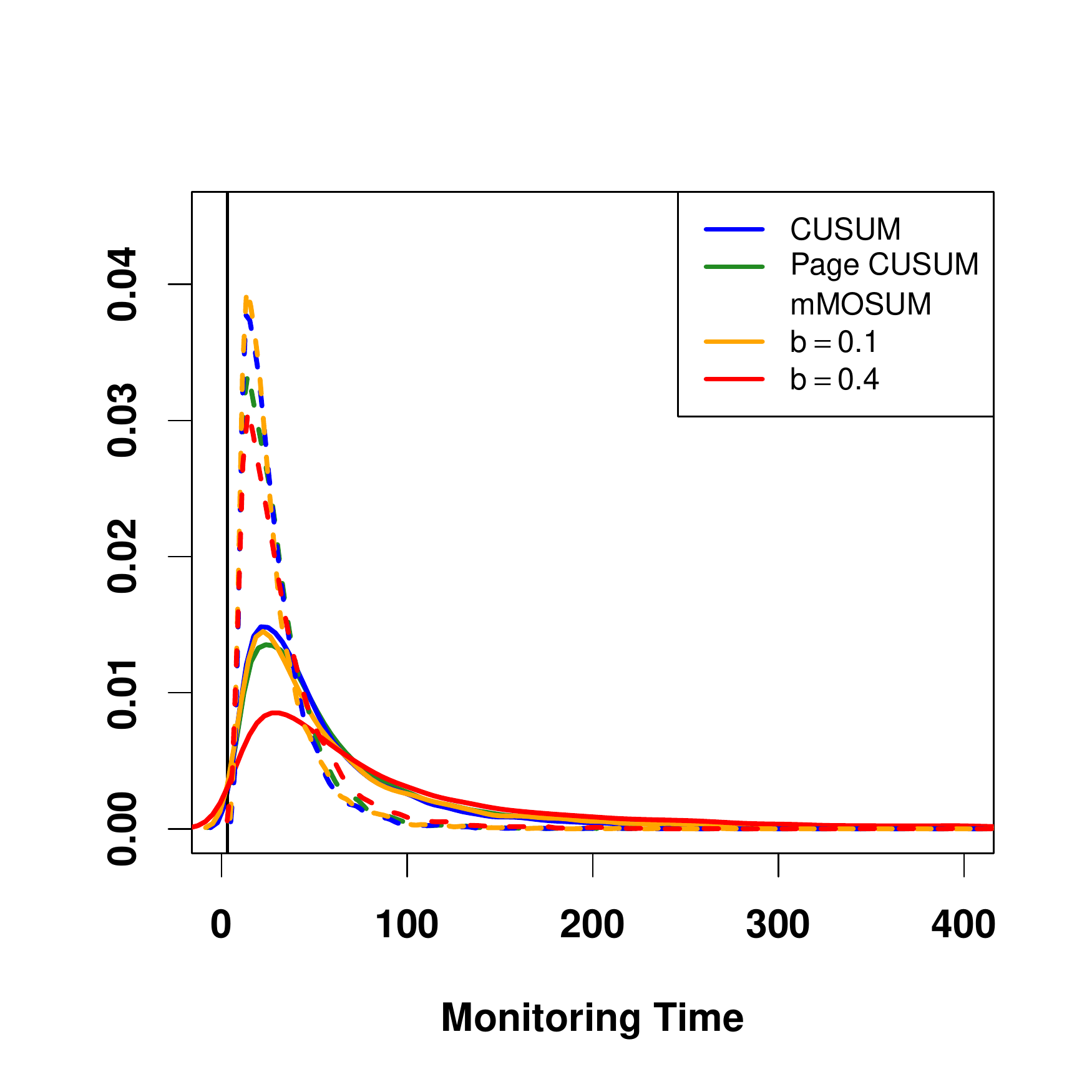}
&\hspace{-0.3cm}\includegraphics[width=0.3\textwidth]{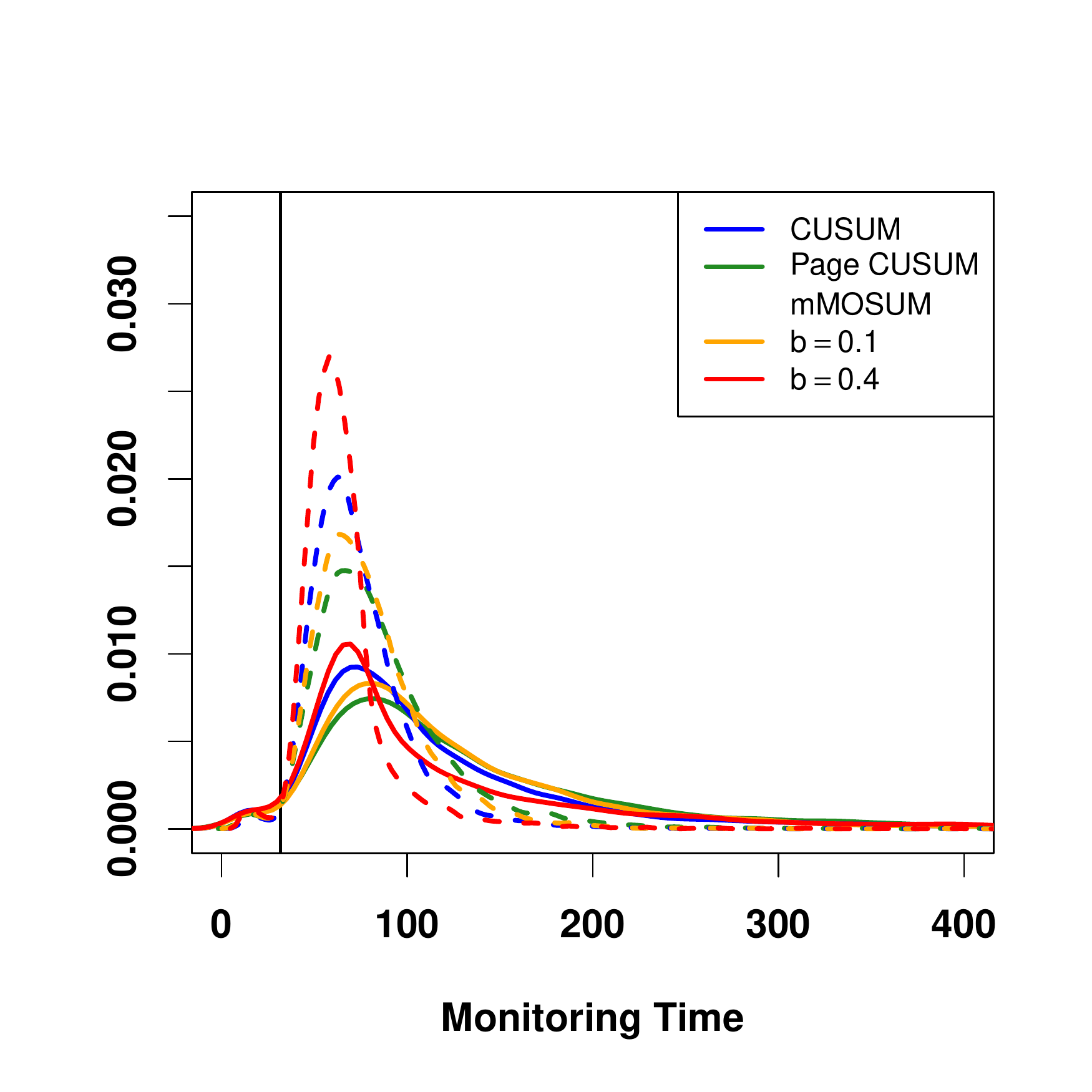}
&\hspace{-0.3cm}\includegraphics[width=0.3\textwidth]{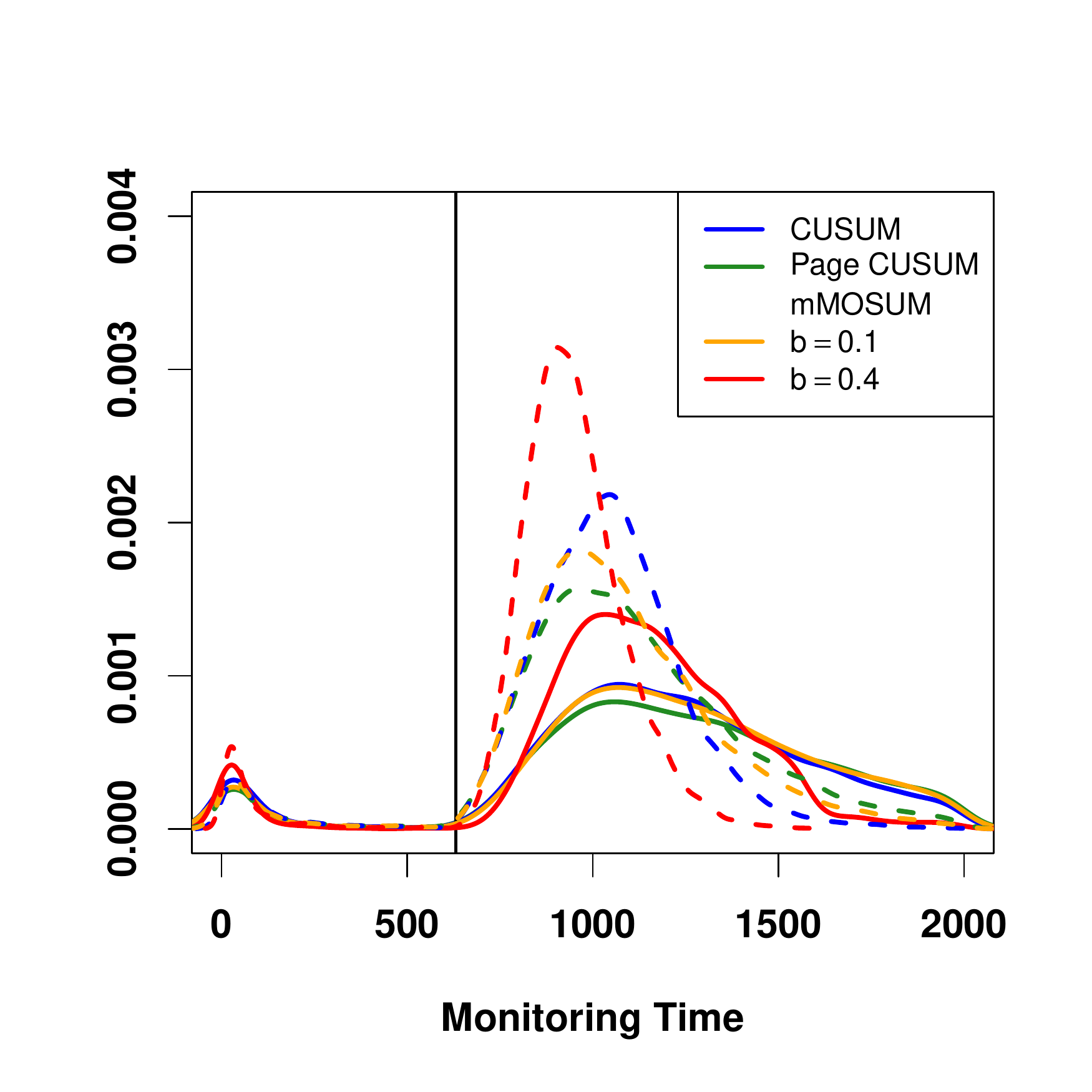}\\
\end{tabular}
	\caption{Estimated densities of the stopping time for the DOM kernel (solid lines) and the Wilcoxon kernel (dashed lines) for independent $t_3$-distributed observations.}
	\label{run1.t}
	\end{figure}
	\end{center}
	
	Table \ref{emppower} reports the size-corrected power (i.e.\ the power that is achieved if the critical values are chosen such that all procedures have empirical size $\alpha$) while Figures~\ref{run1} and \ref{run1.t} show a density plot of the size-corrected run length that is scaled in such a way that the area under the curve integrates to the empirical power.

As has already been observed for the CUSUM statistic in \cite{kirch2018modified}   $\gamma=0$ has the highest power in all situations, where the superiority of $\gamma=0$ is particularly strong for late changes ($\beta=1.4$). In terms of detection delay the CUSUM statistic is almost always outperformed by the other monitoring schemes even for early changes.
 The modified MOSUM with $b=0.9$ has a very poor power when using $\gamma=0.45$, in particular for late changes. The peak at the beginning of the monitoring period in Figure \ref{run1} seems somewhat surprising as we already wait for $a_m=10$ observations before we start monitoring. However, it turns out that it relates to only $5\%$ false positives out of all simulation runs.
The modified MOSUM with $b=0.4$ has the most stable power with respect to the time of the change for both kernels. It also has generally the smallest detection delay with the exception of very late changes where it is outperformed by the modified MOSUM with $b=0.9$ and $\gamma=0$.

For early ($\beta=0.25$) and medium-late changes ($\beta=1$) the detection delay for $\gamma=0.45$ is somewhat smaller than for $\gamma=0$ despite the fact that the power is smaller in this case.
This indicates that early changes are detected either very quickly or not at all for $\gamma\neq 0$, whereas for $\gamma=0$ it might take a bit longer but the change will be detected at some point with a very high reliability.

Furthermore, the Wilcoxon kernel almost achieves the same power and detection delay as the DOM kernel for normal data, but clearly outperforms the latter in case of more heavy-tailed $t$-innovations (with $3$ degrees of freedom). 
An additional benefit of the Wilcoxon kernel is its superiority with respect to robustness against outliers as can clearly be seen in Table 6.5 in \cite{diss} and in the density plots of the stopping times in Figure 6.3 in \cite{diss}.

\clearpage
\subsection{Data analysis}\label{sec.data}

\begin{figure}[tb]
Monthly \textbf{mean} temperature:
\begin{center}
\vspace{-0.5cm}
\includegraphics[width=0.8\textwidth]{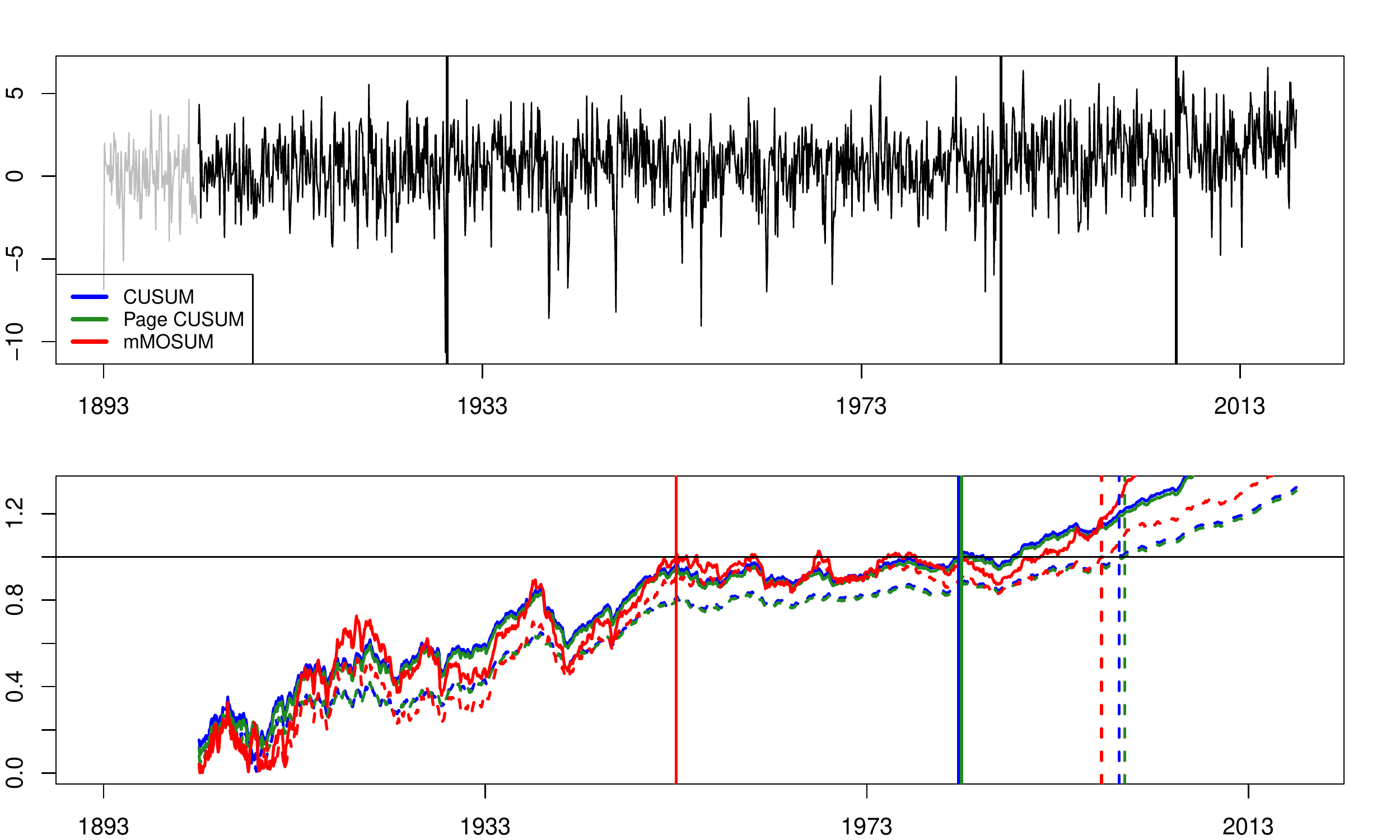}
\end{center}
%
Monthly \textbf{minimal} temperature:
\vspace{-0.5cm}
\begin{center}
\includegraphics[width=0.8\textwidth]{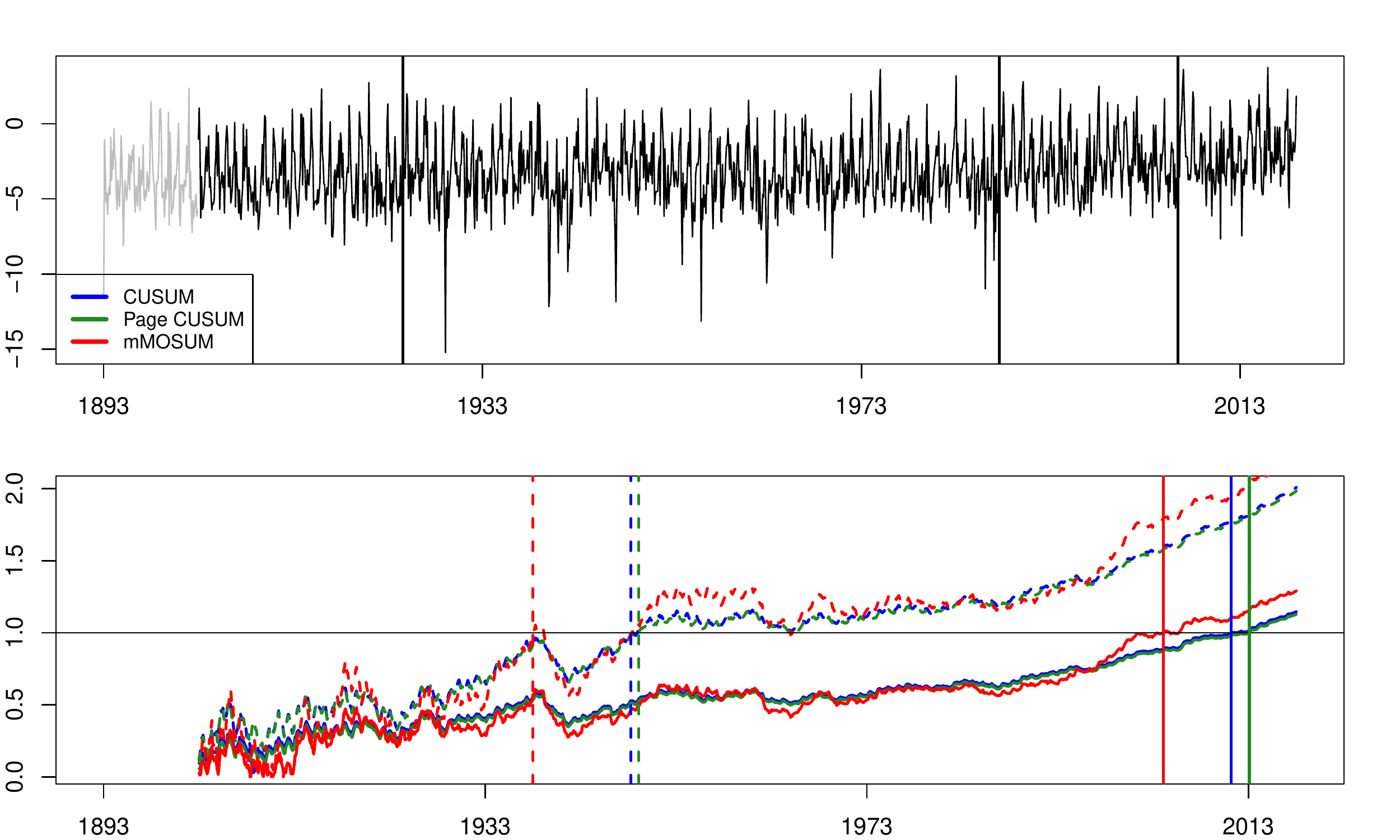}
\end{center}
\caption{Deseasonalized monthly \textbf{mean} and \textbf{minimal} temperature in Potsdam, Germany. In both cases, the upper plot  shows the data, as well as the two change points detected by an a posteriori procedure, where the second one has been dominating. The lower plot shows the normalized monitoring statistics where monitoring is stopped as soon as the horizontal line at one is crossed. The vertical lines indicate when this happens. The Wilcoxon monitoring statistics are indicated by the dashed line, the DOM by a solid line. The blue line indicates the CUSUM monitoring, green Page CUSUM and red mMOSUM.}
	\label{mintemp}
	\end{figure}

In the following data example we analyze the monthly mean and minimal temperatures in Potsdam, Germany \footnote{provided by DWD at {https://opendata.dwd.de/climate\_environment/CDC/observations\_germany/climate/monthly/kl/historical/}} from 1893 to 2018. We use the first 120 months as our historic data set, thus starting the monitoring in 1913. In a preprocessing step we remove seasonalities by substracting the average temperature by month obtained from the historic data set. We then apply the different monitoring schemes with the DOM- as well as the Wilcoxon kernel to both data sets using the weight function in \eqref{exw} with $\gamma=0$. We test at a level of $\alpha=5\%$ and use $b=0.4$ for the modified MOSUM procedures.  For a better comparison the monitoring statistics are divided by the respective critical value and the weight function such that the null hypothesis is rejected as soon as the normalized monitoring statistic crosses the horizontal line at 1. The resulting rejection times as well as the normalized monitoring statistics can be seen in Figure~\ref{mintemp}. The black lines represent the change points resulting from the offline Wilcoxon procedure provided in \cite{Dehl}. First, the changes in October 1987 (mean temperatures) respectively in August 1987 (minimal temperatures) have been detected with p-values less than $5 \cdot 10^{-7}$. Splitting the data sets in those points and reapplying the procedure reveals additional changes in the first part of the data sets in April 1929/ August 1924 and in the second part in May/ June 2006 with p-values between $0.0004$ and $0.094$. For the monthly mean temperatures the mMOSUM in combination with the DOM kernel has a much smaller detection delay than all other procedures. Furthermore, the sequential procedures based on the DOM kernel detect the first change whereas the procedures based on the Wilcoxon kernel only reject after the second and more significant change has appeared. Opposite behavior, i.e. superiority of the Wilcoxon kernel, with an even stronger effect can be observed when analyzing the monthly minimal temperatures. 
\vspace{2mm}

In conclusion, the present data example confirms the findings from the simulation study: On the one hand, the DOM kernel performs better for the mean monthly temperatures which are closer to normality. On the other hand, the Wilcoxon kernel is advantageous for the minimal monthly temperatures which are further away from normality and do contain more outliers. Additionally, for a given kernel the modified MOSUM has the smallest detection delay in all cases.

\clearpage
\appendix
	
\section{Proofs}
We 
first prove a preliminary lemma  that will be used to show the negligibility of the remainder term in the respective Hoeffding decompositions and is also of independent interest. A more detailed proof can be found in the proof of Lemma 4.1 in \cite{diss}.

\begin{lemma}\label{r.conv.generalg}
Let $\{Y_i\}_{i\geq 1}$ and $\{Y_{i,m}'\}_{i\geq 1}$ be sequences of random variables. Let Assumption \ref{regw} be fulfilled for the weight function. Assume that for $g_m:\mathbb{R}^2\rightarrow \mathbb{R}$ it holds
\begin{equation}\label{as1H0w}
\E\left(\left|\sum_{i=1}^m\sum_{j=k_1}^{k_2}g_m(Y_i,Y_{j,m}')\right|^2\right)\leq u(m)(k_2-k_1+1)\quad\mbox{for all } 1\leq k_1\leq k_2
\end{equation}
with $\frac{u(m)}{m^{2-2\gamma}}\log^2(m)\rightarrow 0$ for all $\delta> 0$. Then, it holds as $m\rightarrow\infty$
$$\sup_{k\geq 1}w(m,k)\sup_{0\leq l\leq k}\left|\frac{1}{m}\sum_{i=1}^m\sum_{j=l+1}^{k}g_m(Y_i,Y_{j,m}')\right|=o_P(1).$$
\end{lemma}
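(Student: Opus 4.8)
The plan is to reduce the statement to a maximal inequality of Móricz type for the partial sums in the index $j$, and then to control the supremum over $k$ through a dyadic decomposition that exploits the decay of $\rho$ at $0$ and at $\infty$ imposed by Assumption~\ref{regw}. First I would abbreviate $V_j=\sum_{i=1}^m g_m(Y_i,Y'_{j,m})$ and $T(l)=\sum_{j=1}^l V_j$ (with $T(0)=0$), so that the inner double sum becomes $\sum_{i=1}^m\sum_{j=l+1}^k g_m(Y_i,Y'_{j,m})=T(k)-T(l)$ and hence $\sup_{0\le l\le k}\left|\tfrac1m\left(T(k)-T(l)\right)\right|\le \tfrac2m\max_{0\le l\le k}|T(l)|$. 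In this notation the hypothesis \eqref{as1H0w} reads $\E|T(b)-T(a)|^2\le u(m)(b-a)$ for all $0\le a\le b$, i.e.\ the second moments of the increments are superadditive with constant weights $c_j\equiv u(m)$. Note that nothing beyond this second-moment bound is used, so the argument is deterministic once \eqref{as1H0w} is granted.

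Because this moment bound is \emph{linear} in the block length, we are in the borderline exponent of Móricz' maximal inequality, which then produces a logarithmic penalty. I would invoke it in the form
\[
\E\left(\max_{0\le l\le k}|T(l)|^2\right)\le C\,(\log(k\vee 2))^2\,u(m)\,k
\]
for an absolute constant $C$. This is exactly where the factor $\log^2(m)$ in the assumption on $u(m)$ originates.

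It then remains to control $M_m:=m^{-3/2}\sup_{k\ge1}\rho(k/m)\max_{0\le l\le k}|T(l)|$ (the factor $2$ being harmless), for which I would decompose $k$ into dyadic blocks $B_r=\{k:2^r\le k<2^{r+1}\}$, $r\ge0$. On $B_r$ the weight is bounded by $\sup_{k\in B_r}\rho(k/m)$, and Assumption~\ref{regw}(ii)--(iii) together with continuity of $\rho$ give $\rho(t)\le Ct^{-\gamma}$ for $t\le1$ and $\rho(t)\le Ct^{-1}$ for $t\ge1$. A union bound over $r$, Markov's inequality, and the maximal inequality above yield, for fixed $\delta>0$,
\[
\P(M_m>\delta)\le \frac{C}{\delta^2 m^3}\sum_{r\ge0}(r+1)^2\,u(m)\,2^{r}\left(\sup_{k\in B_r}\rho(k/m)\right)^2 .
\]
For blocks with $2^{r+1}\le m$ I bound the weight by $C(2^r/m)^{-\gamma}$, making the summand $\le C\,u(m)\,m^{2\gamma-3}(r+1)^2 2^{r(1-2\gamma)}$; since $1-2\gamma>0$ the geometric sum up to $r\approx\log_2 m$ is dominated by its last term and contributes $O(u(m)m^{-2}\log^2 m)$. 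For blocks with $2^{r}\ge m$ I bound the weight by $Cm/2^{r}$, making the summand $\le C\,u(m)\,m^{-1}(r+1)^2 2^{-r}$; the geometric tail from $r\approx\log_2 m$ again contributes $O(u(m)m^{-2}\log^2 m)$, while the $O(1)$ blocks straddling $t=1$ are handled by boundedness of $\rho$ on compact sets. Since $\gamma\ge0$ yields $m^{-2}\le m^{-(2-2\gamma)}$, both regimes are bounded by $C\delta^{-2}u(m)m^{-(2-2\gamma)}\log^2 m\to0$, which gives $M_m=o_P(1)$ as claimed.

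The main obstacle is the maximal inequality in its borderline (linear-length) form: keeping track of the sharp $\log^2$ penalty is precisely what matches the moment condition $u(m)m^{-(2-2\gamma)}\log^2(m)\to0$. The second delicate point is arranging the dyadic sums so that the geometric series on both sides of $t=1$ are each dominated by their extreme term near $k\approx m$, which is where the two opposite bounds on $\rho$ (the $t^{-\gamma}$ behaviour at $0$ and the $t^{-1}$ behaviour at $\infty$) meet and conspire to give the same final rate.
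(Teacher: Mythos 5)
Your proof is correct, and it rests on the same two pillars as the paper's own argument: the borderline (linear-in-block-length) M\'oricz maximal inequality, which is the source of the $\log^2$ penalty, and a dyadic decomposition combined with Markov's inequality and a union bound, exploiting the bounds $\rho(t)\le Ct^{-\gamma}$ near $0$ and $\rho(t)\le Ct^{-1}$ at infinity. The difference is organizational but not trivial. The paper splits the range at $k=m$: for $k\le m$ it applies the maximal inequality \emph{once} over the whole range $[1,m]$, giving $\max_{1\le k\le m}|T(k)|=O_P\bigl(\sqrt{m\,u(m)\log^2 m}\bigr)$, and then multiplies by the worst-case weight $\max_{1\le k\le m}w(m,k)=O(m^{\gamma-1/2})$; this decoupling of the weight from the partial-sum maximum is exactly where the exponent $2-2\gamma$ in the hypothesis is consumed. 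For $k>m$ it uses dyadic blocks $[m2^r,m2^{r+1}]$, essentially as you do. Your version instead runs dyadic blocks over the entire range $k\ge 1$ and keeps each block's weight paired with that block's own maximum. This is genuinely sharper in the small-$k$ regime: as your computation shows, the factor $(2^r/m)^{-\gamma}$ against $\sqrt{2^r}$ is maximized at $2^r\asymp m$, where the $\gamma$'s cancel, so the sub-$m$ contribution is $O\bigl(\delta^{-2}u(m)m^{-2}\log^2 m\bigr)$ rather than $O\bigl(u(m)m^{2\gamma-2}\log^2 m\bigr)$. In other words, your argument proves the lemma under the weaker condition $u(m)m^{-2}\log^2(m)\to 0$, with the stated hypothesis being sufficient a fortiori; the paper's proof, by contrast, genuinely needs $u(m)m^{-(2-2\gamma)}\log^2(m)\to 0$ because of the decoupling step. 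What the paper's split-at-$m$ organization buys is slightly lighter bookkeeping (one maximal inequality below $m$, one geometric series above); what your uniform blocking buys is a cleaner, scale-symmetric argument and a visibly weaker moment condition.
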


\begin{proof}
First, we show that the  supremum over $1\le k\le m$ converges to zero.
Since the double sum in \eqref{as1H0w} equals $\sum_{j=k_1}^{k_2}\xi_m(j)$ with $\xi_m(j)=\sum_{i=1}^mg_m(Y_i,Y_{j,m}')$, by \eqref{as1H0w} Theorem 3 in \cite{moricz1976} in combination with the Markov inequality yields
\begin{align*}
	\max_{1\leq k\leq m}\left|\sum_{i=1}^m\sum_{j=1}^{k}g_m(Y_i,Y_{j,m}')\right|=O_P\left(\sqrt{m\,u(m)\,\log^2(m)}\right)=o_P(m^{3/2-\gamma})
\end{align*}
as $u(m)\log^2(m)/m^{2-2\gamma}\to 0$.
Additionally, by Assumption~\ref{regw} (i) and (ii) it holds $\max_{1\le k\le m} w(m,k)=O(m^{\gamma-1/2})$, from which the negligibility of the supremum over $1\le k\le m$ follows as 
\begin{align}\label{eq_ck_lem2}
	&\sup_{1\le k\le m}w(m,k)\sup_{0\leq l\leq k}\left|\frac{1}{m}\sum_{i=1}^m\sum_{j=l+1}^{k}g_m(Y_i,Y_{j,m}')\right|\le \frac{2}{m^{3/2-\gamma}}\,\max_{1\leq k\leq m}\left|\sum_{i=1}^m\sum_{j=1}^{k}g_m(Y_i,Y_{j,m}')\right|=o_P(1).
\end{align}

For the supremum over $k>m$ first note that by a version of above theorem in \cite{moricz1976} (for the exact details we refer to Corollary C.8 in \cite{diss}) it holds with $\xi_m(j)$ as above (for $r\ge 0$)
\begin{align*}
	&\E\left(\max_{m\, 2^r+1\leq k\leq m\, 2^{r+1}}\left|\sum_{j=1}^k\xi_m(j)\right|\right)^2\leq 4u(m)\,m 2^r\log_2^2\left(4 m 2^r\right)=4 u(m) \,m \, 2^r \,(2+\log_2(m)+r)^2.
\end{align*}

Thus by an application of Markov's inequality
\begin{align*}
	&P\left(\sup_{k> m}\left|\frac{1}{k\sqrt{m}}\sum_{i=1}^m\sum_{j=1}^{k}g_m(Y_i,Y_{j,m}')\right|>\epsilon\right)\le P\left(\sup_{r\geq 0}\frac{1}{m^{3/2}\,2^r}\max_{m2^r+1\leq k\leq m2^{r+1}}\left|\sum_{j=1}^k\xi_m(j)\right|>\epsilon\right)\\
	&\leq \sum_{r\geq 0} P\left(\frac{1}{m^{3/2}\,2^r}\max_{m2^r+1\leq k\leq m2^{r+1}}\left|\sum_{j=1}^k\xi_m(j)\right|>\epsilon\right)\le \frac{4 u(m)}{m^2\,\epsilon^2}\sum_{r\ge 0}\frac{(2+\log_2(m)+r)^2}{2^r}\\
	&=O\left( \frac{u(m)\,\log^2(m)}{m^2} \right)=o(1).
\end{align*}
Additionally, it holds by Assumption~\ref{regw} (iii) 
\begin{align*}
&\sup_{k> m}w(m,k)\frac{k}{\sqrt{m}}\leq \sup_{k> m}\frac{k}{m}\rho\left(\frac km\right)\leq \sup_{t> 1}t\rho\left(t\right)=O(1).
\end{align*}
Together with \eqref{eq_ck_lem2} this yields
\begin{align*}
&\sup_{k> m}w(m,k)\max_{1\leq l\leq k}\left|\frac{1}{m}\sum_{i=1}^m\sum_{j=1}^{l}g_m(Y_i,Y_{j,m}')\right|=O(1)\,\sup_{k> m}\max_{1\leq l\leq k}\left|\frac{1}{k\sqrt{m}}\sum_{i=1}^m\sum_{j=1}^{l}g_m(Y_i,Y_{j,m}')\right|\\
 &\le O(1)\,\left(\max_{1\leq l\leq m}\left|\frac{1}{m^{3/2}}\sum_{i=1}^m\sum_{j=1}^{l}g_m(Y_i,Y_{j,m}')\right|+\sup_{l>m}\left|\frac{1}{l\sqrt{m}}\sum_{i=1}^m\sum_{j=1}^{l}g_m(Y_i,Y_{j,m}')\right|\right)=o_P(1),\end{align*}
completing the proof.
\end{proof}

\subsection{Proofs of Section \ref{sec.asH0}}
More detailed versions of the proofs in this section can be found in \cite{diss}, sections 3.2 and 4.1.

\begin{proof}[Proof of Theorem \ref{as.H0}]
	By Hoeffdings decomposition~\eqref{Hdec} it holds
	\begin{align*}
	&	\Gamma(m,k)=\widetilde{\Gamma}(m,k)+R(m,k),\\
	&\text{with } \widetilde{\Gamma}(m,k)=\sum_{j=m+1}^{m+k}h_2(Y_j)+\frac{k}{m}\sum_{i=1}^{m}h_1(Y_i), \quad R(m,k)=\frac{1}{m}\sum_{i=1}^m\sum_{j=m+1}^{m+k}r(Y_i,Y_j).
	\end{align*}
	Denote by $\widetilde{\Psi}_j(m,k)$, $j=1,2,3$, the respective monitoring schemes based on $\widetilde{\Gamma}$ instead of $\Gamma$ as in Section~\ref{sec.scheme}.

	Lemma~\ref{r.conv.generalg} shows in all three cases, that the remainder term is negligible, i.e.\ 
	\begin{align*}
		\sup_{k\ge 1}w(m,k)|\Psi_j(m,k)	|=\sup_{k\ge 1}w(m,k)|\widetilde{\Psi}_j(m,k)|+o_p(1).
	\end{align*}

	We first derive some preliminary results that will be useful for the proofs of all three assertions.
	By the existence of the limit of $t\rho(t)$ for $t\to\infty$ and the continuity of $\rho$, it holds as $m\to\infty$ 
\begin{align}\label{rhosup}
	\sup_{t>\tau }\left|t\rho(t)-\frac{\lfloor t m\rfloor}{m}\,\rho\left(\frac{\lfloor t m\rfloor}{m}\right)\right|=o(1),\qquad 	\sup_{t>\tau }\left|\rho(t)-\rho\left(\frac{\lfloor t m\rfloor}{m}\right)\right|=o(1). \end{align}

Furthermore, by Assumption~\ref{regass} (ii) and (iv) it holds
\begin{align*}
&\sup_{k>mT}\frac{\sqrt{m}}{k}\sup_{1\leq l\leq k}\left|\sum_{j=m+1}^{m+l}h_2(Y_j)\right|\\
&
\leq
\frac{1}{T}\sup_{1\leq l\leq m}\frac{1}{\sqrt{m}}\left|\sum_{j=m+1}^{m+l}h_2(Y_j)\right|+\frac{1}{\sqrt{T}}\sup_{m\leq l\leq m\sqrt{T}}\frac{\sqrt{m}}{\sqrt{T}m}\left|\sum_{j=m+1}^{m+l}h_2(Y_j)\right|\\*
&\qquad+\frac{1}{T^{1/4}}\sup_{m\sqrt{T}\leq l\leq mT}\frac{T^{1/4}\sqrt{m}}{Tm}\left|\sum_{j=m+1}^{m+l}h_2(Y_j)\right|+\frac{1}{\sqrt{T}}\sup_{l>mT}\frac{\sqrt{Tm}}{l}\left|\sum_{j=m+1}^{m+l}h_2(Y_j)\right|\\
&\leq\frac{1}{T}\sup_{1\leq l\leq m}\frac{1}{\sqrt{m}}\left|\sum_{j=m+1}^{m+l}h_2(Y_j)\right|+\frac{1}{\sqrt{T}}\sup_{l\geq m}\frac{\sqrt{m}}{l}\left|\sum_{j=m+1}^{m+l}h_2(Y_j)\right|\\*
&\qquad+\frac{1}{T^{1/4}}\sup_{l\geq m\sqrt{T}}\frac{T^{1/4}\sqrt{m}}{l}\left|\sum_{j=m+1}^{m+l}h_2(Y_j)\right|+\frac{1}{\sqrt{T}}\sup_{l>mT}\frac{\sqrt{Tm}}{l}\left|\sum_{j=m+1}^{m+l}h_2(Y_j)\right|\\
&=o_P(1)\quad\mbox{as }T\rightarrow\infty\mbox{ uniformly in m.}
\end{align*}
With $\lim_{t\rightarrow\infty}t\rho(t)<\infty$ this yields uniformly in $m$ as $T\to\infty$
\begin{align}\label{eq_new_hr}
&\sup_{k> mT}w(m,k)\sup_{1\leq l\le k}\left|\sum_{j=m+1}^{m+l}h_2(Y_j)\right|
\leq \sup_{t>T}t\rho(t)\sup_{k> mT}\sup_{1\leq l\le k}\frac{\sqrt{m}}{k}\left|\sum_{j=m+1}^{m+l}h_2(Y_j)\right|= o_P(1).
\end{align}

Furthermore, it holds by Assumptions~\ref{regw}(ii) as well as \ref{regass} (ii) and (iii) \begin{align}\label{eq.diff3.pa}
&\sup_{1\leq k\leq \tau m}w(m,k)\sup_{0\leq l\leq k}\left|\sum_{j=m+l+1}^{m+k}h_2(Y_j)+\frac{k-l}{m}\sum_{i=1}^{m}h_1(Y_i)\right|\notag\\
&\leq\sup_{0<t<\tau}\tau^{\gamma}\rho\left(\tau\right)\,\sup_{1\leq k\leq \tau m}\sup_{0\leq l\leq k}\frac{1}{m^{\frac{1}{2}-\gamma}k^{\gamma}}\left|\sum_{j=m+l+1}^{m+k}h_2(Y_j)\right|+\sup_{0<t<\tau}\tau\rho\left(\tau\right)\,
\frac{1}{\sqrt{m}}\left|\sum_{i=1}^{m}h_1(Y_i)\right|\notag\\
&\le \sup_{0<t<\tau}\tau^{\gamma}\rho\left(\tau\right)\,\left(2\,\sup_{1\le k\le m}\frac{1}{m^{\frac{1}{2}-\gamma}k^{\gamma}}\left|\sum_{j=1}^{m+k}h_2(Y_j)\right|+ \frac{1}{\sqrt{m}}\left|\sum_{i=1}^{m}h_1(Y_i)\right|\right)
=o_P(1)
\end{align}
for $\tau\to 0$ (uniformly in $m$).

Let 
\begin{align}\label{eq_wp}
	W_2(t):=\frac{1}{\sigma_2}(\tilde{W}_2(1+t)-\tilde{W}_2(1)),\qquad W_1(t):=\frac{1}{\sigma_1}\tilde{W}_1(t)
\end{align}
for $0< t\leq T$ with $\tilde{W}_j$ as in Assumption~\ref{regass} (ii). Then, $W_1$ and $W_2$ are independent standard Wiener Processes and  we obtain
\begin{align}\label{eq_new_wp}
&\sup_{0< t\leq \tau}\rho(t)\sup_{0\leq s\leq t}\left|\tilde{W}_2(1+t)-\tilde{W}_2(1+s)+(t-s)\tilde{W}_1(1)\right|\notag\\
&=\sup_{0< t\leq \tau}\rho(t)\sup_{0\le s\leq t}\left|\sigma_2\left(W_2(t)-W_2(s)\right)+(t-s)\sigma_1W_1(1)\right|\notag\\
&\leq 2\sigma_2\,\sup_{0< t\leq \tau}t^{\gamma}\rho(t)\,\sup_{0\le s\leq \tau}s^{-\gamma}\left|W_2(s)\right|+\sigma_1\,\tau^{1-\gamma}\sup_{0< t\leq \tau}t^{\gamma}\rho\left(t\right)\left|W_1(1)\right|=o_P(1)
\end{align}
as $\tau\to 0$,
where the last line follows from the modulus of continuity of a Wiener process as well as Assumption~\ref{regw} (ii).

\vspace{2mm}

Now, we are ready to prove the main assertions, where we begin with (i).
First note, that for $0<\tau<T<\infty$ arbitrary but fixed we obtain by \eqref{eq_new_hr}
\begin{align}\label{eq.diff1.gen}
	&\sup_{k>\tau m}\left|w(m,k)\widetilde{\Psi}_1(m,k) -
w(m,k)\left(\sum_{j=m+1}^{m+\min(k,mT)}h_2(Y_j)+\frac{k}{m}\sum_{i=1}^{m}h_1(Y_i)\right)\right|\notag\\
&\leq\sup_{k> mT}w(m,k)\left|\sum_{j=m+1}^{m+mT}h_2(Y_j)\right|+\sup_{k> mT}w(m,k)\left|\sum_{j=m+1}^{m+k}h_2(Y_j)\right|=o_P(1)\end{align}
uniformly in $m$ as $T\to\infty$. Furthermore, by \eqref{rhosup} and Assumption~\ref{regass} (ii) it holds
\begin{align}
	&\sup_{k>\tau m}w(m,k)\left|\sum_{j=m+1}^{m+\min(k,mT)}h_2(Y_j)+\frac{k}{m}\sum_{i=1}^{m}h_1(Y_i)\right|\notag\\
	&=\sup_{t\ge \frac{\lfloor \tau m\rfloor +1}{m}}\rho\left( \frac{\lfloor mt\rfloor}{m} \right)\,\left|\frac{1}{\sqrt{m}}\sum_{j=m+1}^{m+\lfloor m\min(t,T)\rfloor}h_2(Y_j)+\frac{\lfloor m t\rfloor}{m}\,\frac{1}{\sqrt{m}}\sum_{i=1}^{m}h_1(Y_i)\right|\notag\\
	&=\sup_{t\ge \frac{\lfloor \tau m\rfloor +1}{m}}\rho\left(t\right)\,\left|\frac{1}{\sqrt{m}}\sum_{j=m+1}^{m+\lfloor m\min(t,T)\rfloor}h_2(Y_j)+t\,\frac{1}{\sqrt{m}}\sum_{i=1}^{m}h_1(Y_i)\right|+o_P(1)\notag\\
	&\stackrel{\mathcal{D}}{\longrightarrow} \sup_{t>\tau}\rho(t)\left|\tilde{W}_2(1+\min(t,T))-\tilde{W}_2(1)+t\tilde{W}_1(1)\right|.\label{eq.min.gen.final}
\end{align}
Similarly, by the boundedness of $t\rho(t)$ for $t\to\infty$ and the law of iterated logarithm we obtain for $T\to\infty$

\begin{align}\label{eq.diff2.gen}
&\sup_{t>\tau}\left|\rho(t)\left(\tilde{W}_2(1+\min(t,T))-\tilde{W}_2(1)+t\tilde{W}_1(1)\right)-\rho(t)\left(\tilde{W}_2(1+t)-\tilde{W}_2(1)+t\tilde{W}_1(1)\right)\right|\notag\\
&\leq 2\,\sup_{t> T}t\rho(t)\,\sup_{s\ge T}\left|\frac{\tilde{W}_2(1+s)}{s}\right|=o(1)\mbox{ a.s.} 
\end{align}
Combining \eqref{eq.diff3.pa}, \eqref{eq_new_wp}, \eqref{eq.diff1.gen}, \eqref{eq.min.gen.final},  \eqref{eq.diff2.gen} and \eqref{eq.diff2.gen}, for details see Lemma B.2 in \cite{diss}, assertion (i) is obtained with $W_1$ and $W_2$ as in \eqref{eq_wp}.

\vspace{2mm}

For (ii) we get similarly with \eqref{eq_new_hr}  for $0<\tau<T<\infty$ arbitrary but fixed
\begin{align*}
		&\sup_{k>\tau m}w(m,k)\left|\widetilde{\Psi}_2(m,k) -
		w(m,k)\left(\sum_{j=m+\lfloor\min(k,mT)b\rfloor+1}^{m+\min(k,mT)}h_2(Y_j)+\frac{k-\lfloor kb\rfloor}{m}\sum_{i=1}^{m}h_1(Y_i)\right)\right|\\
		&\le \sup_{k> mT}w(m,k)\left|\sum_{j=m+\lfloor mTb\rfloor+1}^{m+mT}h_2(Y_j)\right|+\sup_{k> mT}w(m,k)\left|\sum_{j=m+\lfloor kb\rfloor+1}^{m+k}h_2(Y_j)\right|=o_P(1)
\end{align*}
for $T\to \infty$ (uniformly in $m$).
Using double Gaussian brackets in the lower index of the sum in the next proof as opposed to a single Gaussian bracket can result in a by at most one shifted lower index. However, because the maximum over all summands is of order $o_P(\sqrt{m})$ as can e.g. be seen by the functional central limit theorem in Assumption~\ref{regass} (ii), this difference is neglible in the below context; for details we refer to (4.13) in \cite{diss}. Similarly, the double bracket in the linear part of the second term can be replaced by a single bracket, because the sum there is of order $O_P(\sqrt{m})$.
Thus we get by\eqref{rhosup}
\begin{align*}
&	\sup_{k>\tau m}w(m,k)\left(\sum_{j=m+\lfloor\min(k,mT)b\rfloor+1}^{m+\min(k,mT)}h_2(Y_j)+\frac{k-[kb]}{m}\sum_{i=1}^{m}h_1(Y_i)\right)\\
&=\sup_{t\ge \frac{\lfloor \tau m\rfloor +1}{m}} \rho\left( \frac{\lfloor mt\rfloor}{m} \right)\, \left(\frac{1}{\sqrt{m}}\sum_{j=m+\lfloor \min(\lfloor m t\rfloor,mT)b]+1}^{m+\lfloor m\, \min(t,T)\rfloor}h_2(Y_j)+\frac{\lfloor m t \rfloor-\lfloor\lfloor m t\rfloor b\rfloor}{m}\sum_{i=1}^{m}h_1(Y_i)\right)\\
&=\sup_{t\ge \frac{\lfloor \tau m\rfloor +1}{m}} \rho\left( \frac{\lfloor mt\rfloor}{m} \right)\, \left(\frac{1}{\sqrt{m}}\sum_{j=m+\lfloor m\,\min(t,T)\,b\rfloor+1}^{m+\lfloor m\, \min(t,T)\rfloor}h_2(Y_j)+(1-b)\,\frac{\lfloor m t \rfloor}{m}\sum_{i=1}^{m}h_1(Y_i)\right)+o_P(1)\\
&=\sup_{t\ge \frac{\lfloor \tau m\rfloor +1}{m}} \rho\left( t\right)\, \left(\frac{1}{\sqrt{m}}\sum_{j=m+\lfloor m\,\min(t,T)\,b\rfloor+1}^{m+\lfloor m\, \min(t,T)\rfloor}h_2(Y_j)+(1-b)\,t\,\sum_{i=1}^{m}h_1(Y_i)\right)+o_P(1)\\
&\stackrel{\mathcal{D}}{\longrightarrow} \sup_{t>\tau}\rho(t)\left|\tilde{W}_2(1+\min(t,T))-\tilde{W}_2(1+\min(t,T)b)+t(1-b)\,\tilde{W}_1(1)\right|.
\end{align*}
By the law of iterated logarithm it follows as $T\to\infty$
\begin{align*}
	&\sup_{t>\tau}\left|\rho(t)\left(\tilde{W}_2(1+\min(t,T))-\tilde{W}_2(1+\min(t,T)\,b)+t(1-b)\tilde{W}_1(1)\right)\right.\notag\\*
	&\qquad\left.-\rho(t)\left(\tilde{W}_2(1+t)-\tilde{W}_2(1+tb)+t(1-b)\tilde{W}_1(1)\right)\right|\notag\\
&\leq 4\sup_{t\ge Tb}t\rho(t)\sup_{s\ge Tb}\left|\frac{\tilde{W}_2(1+s)}{s}\right|=o_P(1).
\end{align*}
This completes the proof of (ii) as in (i).

\vspace{2mm}

For the proof of (iii) by \eqref{eq_new_hr} for $0<\tau<T<\infty$ arbitrary but fixed
\begin{align*}
	&\sup_{k>\tau m}w(m,k)\left|\widetilde{\Psi}_3(m,k)-\sup_{1\leq l\leq k}\left|\sum_{j=m+\min(l,mT)+1}^{m+\min(k,mT)}h_2(Y_j)+\frac{k-l}{m}\sum_{i=1}^{m}h_1(Y_i)\right|\right|\\
&\leq\sup_{k> mT}w(m,k)\left|\sum_{j=m+mT+1}^{m+k}h_2(Y_j)\right|+\sup_{k> mT}w(m,k)\sup_{mT\leq l\leq k}\left|\sum_{j=m+l+1}^{m+k}h_2(Y_j)\right|=o_P(1)
\end{align*}
for $T\to \infty$ (uniformly in $m$).
By \eqref{rhosup} we get
\begin{align*}
	&\sup_{t>\tau}\sup_{0<s\le t}\left|\rho\left( \frac{\lfloor mt\rfloor}{m} \right)\frac{\lfloor  mt\rfloor-\lfloor ms \rfloor}{m}- \rho(t) (t-s)\right|\\
	&\le \left(1+\sup_{0<s\le t} \frac{\lfloor ms \rfloor}{\lfloor m t\rfloor}\right)\,\sup_{t>\tau}\left|\rho\left( \frac{\lfloor mt\rfloor}{m} \right)\frac{\lfloor mt\rfloor}{m}-\rho(t)t\right|+\sup_{t>\tau}\rho(t)t \,\sup_{0<s\le t} \left| \frac{s}{t}-\frac{\lfloor m s \rfloor}{\lfloor m t\rfloor} \right|\\
	&\le o(1)+O\left( \frac{1}{m\tau} \right)=o(1),
\end{align*}
\end{proof}

\begin{proof}[Proof of Corollary \ref{simple1}]
Comparing the covariance structures, we first obtain
\begin{align}\label{simpleW}
\left\{W_1(t)+tW_2(1):t\geq 0\right\}\stackrel{\mathcal{D}}{=}\left\{(1+t)W\left(\frac{t}{1+t}\right):t> 0\right\}.
\end{align}
Thus, we obtain (a) (i) as
 \begin{align*}
&\sup_{t>0}\rho(t)\left|W_2(t)+tW_1(1)\right|\stackrel{\mathcal{D}}{=}\sup_{t>0}\rho(t)(1+t)W\left(\frac{t}{1+t}\right)\stackrel{\mathcal{D}}{=} \sup_{0<s<1}\rho\left(\frac{s}{1-s}\right)\frac{|W(s)|}{1-s}.
\end{align*}
Similarly, for (a) (ii)
\begin{align*}
&\sup_{t>0}\rho(t)\left|W_2(t)-W_2(th)+t(1-h)W_1(1)\right|
\stackrel{\mathcal{D}}{=}\sup_{t>0}\rho(t)\left|(1+t)W\left(\frac{t}{1+t}\right)-(1+th)W\left(\frac{th}{1+th}\right)\right|\\
&\stackrel{\mathcal{D}}{=}\sup_{0<s<1}\rho\left(\frac{s}{1-s}\right)\left|\frac{W(s)}{1-s}-(1-s(1-h))\frac{W\left(\frac{sh}{1-s(1-h)}\right)}{1-s}\right|,
\end{align*}
as well as for (a) (iii)
\begin{align*}
&\sup_{t>0}\rho(t)\sup_{0<s\leq t}\left|W_2(t)-W_2(s)+(t-s)W_1(1)\right|\\
&
\stackrel{\mathcal{D}}{=}\sup_{t>0}\rho(t)\sup_{0<s\leq t}\left|(1+t)W\left(\frac{t}{1+t}\right)-(1+s)W\left(\frac{s}{1+s}\right)\right|\\
&\stackrel{\mathcal{D}}{=}\sup_{0<\tilde{t}<1}\rho\left(\frac{\tilde{t}}{1-\tilde{t}}\right)\sup_{0<\tilde{s}\leq \tilde{t}}\left|\frac{W\left(\tilde{t}\right)}{1-\tilde{t}}-\frac{W\left(\tilde{s}\right)}{1-\tilde{s}}\right|.
\end{align*}
Assertion (a) now follows by Theorem~\ref{as.H0} and Slutzkys Lemma.

Concerning (b) comparing covariances yields
\begin{align*}\{\sigma_1\left(\sigma_2W_2(t)+t\sigma_1W_1(1)\right):t>0\}\stackrel{\mathcal{D}}{=}\left\{(\sigma_2^2+\sigma_1^2t)\,W\left(\frac{\sigma_1^2t}{\sigma_2^2+\sigma_1^2t}\right):t>0\right\}.
\end{align*}
For (b) (i) this yields
\begin{align*}
	\sup_{t>0}\frac{\left|\sigma_1(\sigma_2W_2(t)+t\sigma_1W_1(1))\right|}{\sigma_2^2+\sigma_1^2t}\stackrel{\mathcal{D}}{=}\sup_{t>0}\left|W\left(\frac{\sigma_1^2t}{\sigma_2^2+\sigma_1^2t}\right)\right|=\sup_{0<s<1}|W(s)|,
\end{align*}
as well as for (b) (ii)
\begin{align*}
	&\sup_{t>0}\frac{\sigma_1\left|\sigma_2(W_2(t)-W_2(th))+t(1-h)\sigma_1W_1(1)\right|}{\sigma_2^2+\sigma_1^2t}\\
&\stackrel{\mathcal{D}}{=}\sup_{t>0}\left|W\left(\frac{\sigma_1^2t}{\sigma_2^2+\sigma_1^2t}\right)-\frac{\sigma_2^2+\sigma_1^2th}{\sigma_2^2+\sigma_1^2t}W\left(\frac{\sigma_1^2th}{\sigma_2^2+\sigma_1^2th}\right)\right|\\
&\stackrel{\mathcal{D}}{=}\sup_{0<s<1}\left|W(s)-\frac{1-s+h}{1-s+1}W\left(\frac{h}{1-s+h}\right)\right|,
\end{align*}
and (b) (iii)
\begin{align*}
	&\sup_{t>0}\sup_{0<s\leq t}\frac{\sigma_1\left|\sigma_2(W_2(t)-W_2(s))+(t-s)\sigma_1W_1(1)\right|}{\sigma_2^2+\sigma_1^2t}\\
&\stackrel{\mathcal{D}}{=}\sup_{t>0}\left|W\left(\frac{\sigma_1^2t}{\sigma_2^2+\sigma_1^2t}\right)-\frac{\sigma_2^2+\sigma_1^2s}{\sigma_2^2+\sigma_1^2t}W\left(\frac{\sigma_1^2s}{\sigma_2^2+\sigma_1^2s}\right)\right|
\stackrel{\mathcal{D}}{=}\sup_{0<\tilde{t}<1}\sup_{0<\tilde{s}\leq \tilde{t}}\left|W(\tilde{t})-\frac{1+(1-\tilde{s})^{-1}}{1+(1-\tilde{t})^{-1}}W(\tilde{s})\right|.
\end{align*}
Assertion (b) follows from Theorem~\ref{as.H0} by
\begin{align*}
&\sup_{k\geq 1}\frac{\hat{\sigma}_{m,1}}{\sqrt{m}\left(\hat{\sigma}_{2,m}^2+\hat{\sigma}_{1,m}^2\frac{k}{m}\right)}\left|\Gamma_i(m,k)\right|\\
&
=
\sup_{k\geq 1}
\frac{\sigma_{1}}{\sqrt{m}\left(\sigma_{2}^2+\sigma_{1}^2\frac{k}{m}\right)}
\left|\Gamma_i(m,k)\right|+
\sup_{k\ge 1}\left|
\frac{\hat{\sigma}_{1,m}}{\sigma_1}\, \frac{\sigma_2^2+\sigma_1^2\frac{k}{m}}{\hat{\sigma}_{2,m}^2+\hat{\sigma}_{1,m}^2\frac km} -1
\right|\cdot O_P\left(1\right),
\end{align*}
which yields the assertion by
\begin{align*}
	&\sup_{k\ge 1}\left| \frac{\hat{\sigma}_{1,m}}{\sigma_1}\, \frac{\sigma_2^2+\sigma_1^2\frac{k}{m}}{\hat{\sigma}_{2,m}^2+\hat{\sigma}_{1,m}^2\frac km} -1\right|
	=\sup_{k\ge 1}\left|\frac{\hat{\sigma}_{1,m}\sigma_2^2-\sigma_1\hat{\sigma}_{2,m}^2}{\sigma_1\left( \hat{\sigma}_{2,m}^2+\hat{\sigma}_{1,m}^2\frac km \right)}+\frac{\left( \hat{\sigma}_{1,m}\sigma_1-\hat{\sigma}_{1,m}^2 \right)\frac{k}{m}}{\hat{\sigma}_{2,m}^2+\hat{\sigma}_{1,m}^2\frac km }
		\right|\\
		&\le \frac{|\hat{\sigma}_{1,m}\sigma_2^2-\sigma_1\hat{\sigma}_{2,m}^2|}{\sigma_1\, \hat{\sigma}_{2,m}^2}+\frac{\left| \hat{\sigma}_{1,m}\sigma_1-\hat{\sigma}_{1,m}^2 \right|}{\hat{\sigma}_{1,m}^2 }=o_p(1).
\end{align*}
\end{proof}


\subsection{Proofs of Section \ref{sec.asH1}}
We are now ready to prove Theorem~\ref{TheoremH1w2}, i.e.\ that the monitoring statistics will eventually stop with probability one under alternatives and that the corresponding sequential test has asymptotic power one. More detailed versions of the proofs in this section can be found in \cite{diss}, sections 3.3 and 4.2.

\begin{proof}[Proof of Theorem \ref{TheoremH1w2}]
	Let $\tilde{k}>k^*$ with $\tilde{k}=\tilde{k}_m>\nu m$, $\nu>0$.
Then, it holds with \eqref{hoeffh1}
\begin{align}\label{GammaH1tild}
\Gamma\left(m,\tilde{k}\right)
=& \Gamma(m,k^*)+\frac{1}{m}\sum_{i=1}^m\sum_{j=m+k^*+1}^{m+\tilde{k}}r_m^*(Y_i,Z_{j,m})
+\sum_{j=m+k^*+1}^{m+\tilde{k}}h^*_{2,m}(Z_{j,m})\notag\\*
&+\frac{\tilde{k}-k^*}{m}\sum_{i=1}^m h^*_1(Y_i)+(\tilde{k}-k^*)\Delta_m
\end{align}
and corresponding decompositions of the monitoring statistics $\Psi_j(m,\tilde{k})$.
For each situation discussed, we will pick appropriate $\tilde{k}$, such that the signal term $(\tilde{k}-k^*)\Delta_m$ dominates the other four terms.

In fact, by Assumption~\ref{regw} it holds
\begin{align}
	\sup_{t\ge \nu}(1+t)\rho(t)<\infty.
	\label{eq_rho_H1}
\end{align}
By this and Theorem~\ref{as.H0} (applied with weight function~\eqref{exw} with $\gamma=0$) we get
\begin{align}\label{eq_psi_H1}
	w(m,\tilde{k})|\Psi_j(m,k^*)|\le \left( 1+\frac{\tilde{k}}{m} \right)\,\rho\left(\frac{\tilde{k}}{m}\right)\; \frac{1}{\sqrt{m}}\,\frac{1}{1+\frac{k^*}{m}}|\Psi_j(m,k^*)|=O_P(1).
\end{align}
By  Assumption~\ref{regassH1} (i) it holds for any $k^*\le \utilde{k}\le \tilde{k}$ 
\begin{align}
	&w(m,\tilde{k})\left|\frac{1}{m}\sum_{i=1}^m\sum_{j=m+\utilde{k}+1}^{m+\tilde{k}}r_m^*(Y_i,Z_{j,m})\right|\notag\\
	&=\frac{\tilde{k}}{m}\rho\left( \frac{\tilde{k}}{m} \right)\;\frac{1}{\tilde{k}\,\sqrt{m}}\left|\sum_{i=1}^m\sum_{j=m+\utilde{k}+1}^{m+\tilde{k}}r_m^*(Y_i,Z_{j,m})\right|=O_P\left( 1\right).
	\label{eq_rem_H1}
\end{align}
Similarly, by Assumption~\ref{regassH1} 
\begin{align}
	w(m,\tilde{k})\left| \sum_{j=m+\utilde{k}+1}^{m+\tilde{k}}h^*_{2,m}(Z_{j,m}) \right|=O_P(1),\qquad	w(m,\tilde{k})\left|\frac{\tilde{k}-\utilde{k}}{m}\sum_{i=1}^m h^*_1(Y_i)\right|=O_P(1).
	\label{eq_sum_H1}
\end{align}

We are now ready to prove that the signal term with appropriate choice of $\tilde{k}$ converges to $\infty$ guaranteeing asymptotic power one.
To this end, consider the first the situation of late changes with $\frac{k^*}{m}\rightarrow\infty$. For the CUSUM and Page-CUSUM monitoring scheme $\Psi_j$, $j=1,3$, choose $\tilde{k}=2k^*$, such that with \eqref{GammaH1tild} -- \eqref{eq_sum_H1} (for $\utilde{k}=k^*$) it holds by Assumption~\ref{regassH1} (i) and $\sqrt{m}|\Delta_m|\to \infty$
\begin{align*}
	&\sup_{k\ge 1}w(m,k)|\Psi_j(m,k)|\ge w(m,\tilde{k})|\Psi_j(m,\tilde{k})|\\
	&\ge w(m,\tilde{k})(\tilde{k}-k^*)|\Delta_m|+O_P(1)=\frac{k^*}{m}\rho\left( 2\frac{k^*}{m} \right)\; \sqrt{m}|\Delta_m|+O_P(1)\overset{P}{\longrightarrow} \infty.
\end{align*}
Similarly, for the mMOSUM-monitoring scheme $j=2$ consider $\tilde{k}=\lfloor k^*/b\rfloor+1$ such that $\lfloor b\tilde{k}\rfloor\ge k^*$ and  $(\tilde{k}-\lfloor \tilde{k}b\rfloor)/\tilde{k}\ge 1-b$. Then,  we obtain with $\utilde{k}=\lfloor b \tilde{k}\rfloor$ by $\tilde{k}/m\to \infty$
\begin{align*}
	&\sup_{k\ge 1}w(m,k)|\Psi_2(m,k)|\ge w(m,\tilde{k})|\Psi_2(m,\tilde{k})|\ge w(m,\tilde{k})\,(\tilde{k}-\lfloor b\tilde{k}\rfloor)\,|\Delta_m|+O_P(1)\\
	&\ge \frac{\tilde{k}}{m}\rho\left( \frac{\tilde{k}}{m} \right)\,(1-b)\,\sqrt{m}|\Delta_m|+O_P(1)\overset{P}{\longrightarrow} \infty.
\end{align*}
For early changes with $k^*/m=O(1)$ we choose for CUSUM and Page-CUSUM $\tilde{k}=\lfloor t_0 m\rfloor$ with $t_0$ as in Assumption~\ref{altrho} (ii). Then, as above we get for $j=1,3$
\begin{align*}
	&\sup_{k\ge 1}w(m,k)|\Psi_j(m,k)|\ge w(m,\tilde{k})|\Psi_j(m,\tilde{k})|
	=w(m,\tilde{k})\,(\tilde{k}-k^*)|\Delta_m|+O_P(1)\\
	&\ge\rho\left( \frac{ \lfloor t_0 m\rfloor}{m} \right)\,\left( \frac{\lfloor t_0 m\rfloor}{m}-\nu \right)\,\sqrt{m}|\Delta_m|+O_P(1)\overset{P}{\longrightarrow} \infty.
\end{align*}
Finally, for the modified MOSUM ($j=2$) it holds with $\tilde{k}=\lfloor m t_0/b\rfloor +1$ with $t_0$ as in Assumption~\ref{altrho}~(iii)
\begin{align*}
		&\sup_{k\ge 1}w(m,k)|\Psi_2(m,k)|\ge w(m,\tilde{k})\,(\tilde{k}-\lfloor b\tilde{k}\rfloor)\,|\Delta_m|+O_P(1)\\
		&\ge \rho\left( \frac{\lfloor m t_0/b\rfloor +1}{m} \right)\,\left(t_0\left( \frac{1}{b}-1\right)-\frac{b}{m} \right)\;\sqrt{m}|\Delta_m|+O_P(1)\overset{P}{\longrightarrow} \infty,
\end{align*}
completing the proof.
\end{proof}

\subsection{Proofs of Section \ref{sec.exass}}
\begin{proof}[Proof of Lemma \ref{ass.ex}]
	First consider independent $\{Y_i\}$ as in (a). For (i), note that for $1\le  i_1\neq i_2\le m$ and $j\in\{m+1,\ldots,m+k\}$ it holds by 
\eqref{hcenter} 
\begin{align*}
&\E\left(h(Y_{i_1},Y_j)h(Y_{i_2},Y_j)\right)=\int\int h(y_{i_2},y_j)dF(y_{i_2})\int h(y_{i_1},y_j)dF(y_{i_1})dF(y_j)\notag\\
&=\int(h_2(y_j)+\theta)^2dF(y_j)=\E((h_2(Y_j)+\theta)^2)=\E(h_2(Y_j)^2)+\theta^2,
\end{align*}
as well as
\begin{align*}
&\E\left(h(Y_{i_1},Y_j)h_2(Y_j)\right)=\int h_2(y_j)\int h(y_{i_1},y_j)dF(y_{i_1})dF(y_j)\\
&=\int h_2(y_j)^2dF(y_j)+\theta \int h_2(y_j)dF(y_j)=\E(h_2(Y_j)^2).
\end{align*}
Consequently, by the independence we get
\begin{align*}
&\Cov\left(r(Y_{i_1},Y_j),r(Y_{i_2},Y_j)\right)\notag\\
&=\Cov\left(h(Y_{i_1},Y_j)-h_1(Y_{i_1})-h_2(Y_j)-\theta,h(Y_{i_2},Y_j)-h_1(Y_{i_2})-h_2(Y_j)-\theta\right)\notag\\
&=\E\left(h(Y_{i_1},Y_j)h(Y_{i_2},Y_j) \right)-\theta^2-\E\left(h(Y_{i_1},Y_j)h_2(Y_j)\right)-\E\left(h_2(Y_j)h(Y_{i_2},Y_j) \right)+\E\left( h_2(Y_j)^2\right)\\
&=0.
\end{align*}

For $m<j_1\neq j_2\le m+k,$ and $i\in\{1,\ldots,m\}$ it follows analogously that $r(Y_i,Y_{j_1})$ and $r(Y_i,Y_{j_2})$ are uncorrelated. In the case of $i_1\neq i_2\neq j_1\neq j_2$ $r(Y_{i_1},Y_{j_1})$ and $r(Y_{i_2},Y_{j_2})$ are independent.
Hence,  we obtain
\begin{align*}
\E\left(\left|\sum_{i=1}^m\sum_{j=k_1}^{k_2}r(Y_i,Y_j)\right|^2\right)=\Var\left(\sum_{i=1}^m\sum_{j=k_1}^{k_2}r(Y_i,Y_j)\right)=\sigma_r^2m(k_2-k_1+1)
\end{align*}
with $\sigma_r^2=\Var \left(r(Y_1,Y_2)\right)<\infty$ due to \eqref{ass.sym}. Hence, (i) follows.
\vspace{2mm}

Assertion (a)(ii) follows  from the 2-dimensional version of Donsker's Theorem (see Theorem 1.1. in \cite{Einmahl2009}).
\vspace{2mm}

Assertions (a)(iii) and (iv) follow from the   H\'ajek-R\'enyi inequality (see \cite{hajek}) for i.i.d.\ random variables with variance $\sigma_2^2=\Var(h_2(Y_1))$,i.e.\ 
\begin{align*}
&P\left(\sup_{1\leq k\leq m}\frac{1}{m^{\frac{1}{2}-\alpha}k^{\alpha}}\left|\sum_{j=m+1}^{m+k}h_2(Y_j)\right|>C\right)\leq\frac{\sigma_2^2}{C^2}m^{2\alpha-1}\sum_{k=1}^m\frac{1}{k^{2\alpha}}\le \frac{\sigma_2^2}{C^2}\frac{1}{1-2\alpha}
\end{align*}
as well as
\begin{align*}
	&P\left(\sup_{k>k_m}\frac{\sqrt{k_m}}{k}\left|\sum_{j=m+1}^{m+k}h_2(Y_j)\right|>C\right)\leq\frac{\sigma_2^2}{C^2}\left(1+k_m\sum_{k=k_m+1}^{\infty}\frac{1}{k^2}\right)\le 2\frac{\sigma_2^2}{C^2}.\end{align*}
\vspace{2mm}

In the dependent situation as in (b), assertion (i) follows  analogously to Lemma 1 in \cite{Dehl} by showing that there exists a constant C such that
$$\E\left(\left|\sum_{i=1}^m\sum_{j=k_1}^{k_2}r(Y_i,Y_j)\right|^2\right)\leq Cm(k_2-k_1+1).$$
\vspace{2mm}

For assertion (b)(ii)  Lemma 2.15 in \cite{borovkova2001limit} yields that $h_1(\cdot)$ and $h_2(\cdot)$ are also 1-continuous functions. Furthermore, they are bounded and centered such that with (\ref{coeff.cond}) the functional central limit theorem is obtained by Proposition D.4 in \cite{diss}.
\vspace{2mm}

For assertions (b)(iii) and (iv), first note that by Proposition 2.11 in \cite{borovkova2001limit}, $\{h_2(Y_i):i\in\mathbb{Z}\}$ is also a 1-approximating functional of an absolutely regular process with approximating constants
$$ a_k'=\Phi(\sqrt{2a_k})+C\sqrt{2a_k}.$$ Hence, by (\ref{coeff.cond}) the assumption a) of Lemma 2.24 in \cite{borovkova2001limit} is fulfilled for $\{h_2(Y_i):i\in\mathbb{Z}\}$. Consequently, it follows with Theorem B.3. in \cite{kdiss} and the stationarity that there exists a constant $A\geq 4$ such that
\begin{align*}
&\E\left(\sup_{1\leq k\leq m}\frac{1}{m^{\frac{1}{2}-\alpha}k^{\alpha}}\left|\sum_{j=m+1}^{m+k}h_2(Y_j)\right|^4\right)\leq CA\frac{1}{m^{2-4\alpha}}\sum_{k=1}^m k^{1-4\alpha}\leq\frac{1}{2-4\alpha}CA
\end{align*}
as well as
\begin{align*}
&\E\left(\sup_{k>k_m}\frac{\sqrt{k_m}}{k}\left|\sum_{j=m+1}^{m+k}h_2(Y_j)\right|^4\right)\le CA\left(\sum_{k=1}^{k_m}\frac{k}{k_m^2}+\sum_{k=k_m+1}^{\infty}\frac{k_m^2}{k^3}\right)\le \frac 32 CA.
\end{align*}
\end{proof}

\subsection*{Acknowledgements}
This work was supported by the Deutsche Forschungsgemeinschaft (DFG, German Research Foundation) - 314838170, GRK 2297 MathCoRe.
\bibliography{BIB}

\end{document}